\theoremstyle{plain}
\newtheorem{corollary}{Corollary}
\newtheorem{lemma}{Lemma}
\newtheorem{theorem}{Theorem}
\theoremstyle{remark}
\newtheorem*{remark}{Remark}
 \newtheorem{theoA}{Theorem}
\numberwithin{equation}{section}
 \let\le\leqslant 
 \let\ge\geqslant
\newcommand{\T}{\mathbb{{T}}}
 \newcommand{\ds}{\displaystyle}
 \newcommand{\Opp}{\mathcal{D}}
 \newcommand{\Oi}{\mathcal{I}}
 \newcommand{\PP}{\Pi}
 \newcommand{\w}[2]{w^{\left(#1,#2\right)}}
\newcommand{\LL}[2]{L_{#1}^{(#2)}}
 \newcommand{\Lw}[2]{#1,(#2)}
 \newcommand{\Ll}[1]{#1}
 \newcommand{\Ls}[2]{L_{#1}#2}
\newcommand{\cm}[1]{\nu_{#1}}
\begin{document}

\title[Jacobi weights, fractional integration,
 and sharp Ulyanov inequalities %for Ditzian--Totik moduli of smoothness
]
{
Jacobi weights, fractional integration,\\ 
and sharp Ulyanov inequalities % for Ditzian--Totik moduli of smoothness
}
\author{
   Polina~Glazyrina}
\address{P.\,Yu.~Glazyrina,
Subdepartment of Mathematical Analysis and Theory of
Functions, Ural Federal University, pr. Lenina 51, 620083 Ekaterinburg, Russia}
\email{polina.glazyrina@usu.ru}

\author{Sergey~Tikhonov}
\address{S.~Tikhonov, ICREA and Centre de Recerca Matem\`{a}tica,
Apartat 50~08193 Bellaterra, Barce\-lona, Spain}
\email{stikhonov@crm.cat}

\thanks{
The first  author was supported by the Ural Federal University development program
with the financial support of young scientists.
The second  author was partially supported by
 MTM 2011-27637, 2009 SGR 1303, RFFI 13-01-00043, and NSH-979.2012.1.
 }
\date{May 19, 2013}
%\subjclass[2000]{Primary ??}
 \keywords{Jacobi weights, Landau type inequalities, Hardy--Littlewood type inequalities, K-functionals,
 Ditzian--Totik moduli of smoothness, sharp Ulyanov inequality}

\begin{abstract}
We consider functions $L^p$-integrable with Jacobi weights on $[-1,1]$
  and
 prove Hardy--Littlewood type inequalities for fractional integrals.
 As applications, we obtain the sharp $(L_p, L_q)$ Ulyanov-type inequalities for the Ditzian--Totik moduli of smoothness and the $K$-functionals of fractional order.
\end{abstract}
\maketitle

\section{Introduction}
The following $(L_p,L_q)$ inequalities of Ulyanov-type %\cite{Di-Ti-2005}
 between moduli of smoothness of functions on $\T$
play an important role in approximation theory and functional analysis
(see, e.g., \cite{devore, Di-Ti-2005, har}):

%Note that investigation of the (p, q)-inequalities for the moduli of smoothness (Kfunctionals)
%is of great importance since, in many situations, this gives new embedding
%theorems or interpolation results (see [4, Sect. 5], [11, Th. 3], [13, p. 138, (3.9)]
%and Sect. 4 below).

% are important  not only in approximation theory but also in the study (see e.g. \cite{Di-Ti-2005, har, sharp, sharp1}) of embedding theorems of function spaces:
  \begin{equation}\label{ul}
   \omega^r\left(f, t\right)_q \le C
    \left(
    \int_0^t \left(u^{-{\sigma}}\omega^{r}(f,u)_p \right)^{q_1} \frac{du}{u}
    \right)^{1/q_1},
   \end{equation}
   where $r\in \mathbb{N}$, $0< p\le q\le \infty$, ${\sigma}=\frac1p-\frac1q$,  and
   $q_1=\begin{cases}
        q, & q<\infty\\
        1, & q=\infty
        \end{cases}.
   $
Here the $r$-th moduli of smoothness of a function $f\in L_p(\T)$  is given by
\begin{equation*}
\omega^{{r}}\left(f,\delta\right)_{p} = \sup_{|h|\le\delta}
\left\| \Delta^r_h f (x) \right\|_{L^p(\T)},\quad 1\le p \le \infty,
\end{equation*}
where
$$
\Delta^{{r}}_h f (x) = \Delta^{{r}-1}_h\left(\Delta_h f (x)\right) \quad\text{and}
\quad \Delta_h f (x)=f(x+h)-f(x).
$$
Recently (\cite{sharp, trebels}) the sharp version of (\ref{ul}) was proved  in the case  $1<p<q<\infty$: % (\ref{ul}) was improved as follows:
 %  the sharp inequalities were proved for the case or $(p,q)=(1,\infty)$
  \begin{equation}\label{sharp}
   \omega^r\left(f, t\right)_q \le C
    \left(
    \int_0^t \left(u^{-{\sigma}}\omega^{r+{\sigma}}(f,u)_p \right)^{q_1} \frac{du}{u}
    \right)^{1/q},
   \end{equation}
where $\omega^{r}(f,u)_p$ is the moduli of smoothness %of $f\in L^p(\T)$
   of the (fractional) order $r>0.$
Moreover, it turned out  that (\ref{sharp}) also holds if $(p,q)=(1,\infty)$;
see \cite{sharp1}.
In this case $\sigma=1$ and  one can work with the classical (not necessary fractional) moduli of smoothness.
On the other hand, (\ref{sharp}) is not true (\cite{sharp1}) for $1=p<q<\infty$ or $1<p<q=\infty$.

%%%%%%%%%%%%%%%%%%%%%%%%
 In the present paper, we consider a nonperiodic case, namely  $L_p$ spaces with Jacobi weights on an interval,
 and  obtain inequalities similar to (\ref{sharp})  for the fractional $K$-functionals and Ditzian--Totik moduli of smoothness.
 We start with notation.
 %Let us give  definition of fractional $K$-functionals, which we study,  and state the main result.

\medskip
  Denote by $ \w{a}{b}(x)=(1-x)^a(1+x)^b$,  $a,b>-1$,  the  Jacobi weight on $[-1,1]$.
   For $1\le p<\infty$, let $\LL{p}{a,b}$ be  the space of all functions $f$  measurable on $[-1,1]$
   with the finite norm
 $$
  \|f\|_{\Lw{p}{a,b}}=\left(\int_{-1}^1 |f(x)|^p \w{a}{b}(x)dx \right)^{1/p}.
 $$
 If   $a=b=0$,  we write  $L_p=\LL{p}{a,b}$,
 $\| \cdot \|_{\Ll{p}}=\| \cdot \|_{\Lw{p}{0,0}}$.
 In the case $p=\infty$,  we set $\LL{p}{a,b}:=C[-1,1]$ and
 $$
   \|f\|_{\Lw{\infty}{a,b}}= \|f\|_{\Ll{\infty}}=\max_{x\in [-1,1]}|f(x)|.
 $$
For an arbitrary interval $[x_1,x_2]$, we set
 $$ \|f\|_{\Ls{p}{[x_1,x_2]}}=\left(\int_{x_1}^{x_2} |f(x)|^p dx \right)^{1/p}, \ 1\le p<\infty,
   \quad  \|f\|_{\Ls{\infty}{[x_1,x_2]}}=\max_{x\in [x_1,x_2]}|f(x)|.
$$

   For $\alpha,\, \beta>-1$, denote by $\psi^{(\alpha,\beta)}_k(x)$, $k=0,\,1,\,\ldots$,  the system of
   Jacobi polynomials orthogonal on $[-1,1]$ with the weight
  $ \w{\alpha}{\beta}$
    and normalized by the condition
     $$
     \int_{-1}^1 \left|\psi^{(\alpha,\beta)}_k(x)\right|^2\w{\alpha}{\beta}(x)dx=1.
     $$
  The Jacobi polynomials
  are the eigenfunctions of the differential operator
  $$
 \Opp=\Opp_2^{(\alpha,\beta)}=\frac{-1}{\w{\alpha}{\beta}(x)}\frac{d}{dx}\w{\alpha}{\beta}(x)
   (1-x^2) \frac{d}{dx},
  $$
  $$
   \Opp \psi^{(\alpha,\beta)}_k=\left(\lambda_k^{(\alpha,\beta)}\right)^2\psi^{(\alpha,\beta)}_k, \qquad
  \lambda_k^{(\alpha,\beta)}=\left(k(k+\alpha+\beta+1)\right)^{1/2}.
  $$
For a function $f \in \LL{p}{\alpha,\beta}$,  $1\le p\le \infty$,  the Fourier--Jacobi expansion is defined as follows:
   \begin{equation}\label{falphabeta1}
    f(x)\sim \sum_{k=0}^\infty
   \widehat{f}^{(\alpha,\beta)}_k  \psi_k^{(\alpha,\beta)}(x),
   \end{equation}
    where
   \begin{equation*}
  \widehat{f}^{(\alpha,\beta)}_k=
   \int_{-1}^1 f(x) \psi_k^{(\alpha,\beta)}(x)\w{\alpha}{\beta}(x) dx, \quad k=0,\,1,\,2,\,\ldots
   \end{equation*}
   Let $\sigma >0$.
  If there exists a function $g\in \LL{1}{\alpha,\beta}$ such that its Fourier--Jacobi expansion has the form
   \begin{equation*}
     g\sim  \sum_{k=1}^\infty
  \left(\lambda_k^{(\alpha,\beta)}\right)^{\sigma}
   \widehat{f}^{(\alpha,\beta)}_k  \psi_k^{(\alpha,\beta)},
   \end{equation*}
  then we use the notation
 $$
 g= \Opp_\sigma^{(\alpha,\beta)} f
 $$
 and we call  $\Opp_\sigma^{(\alpha,\beta)} f $
   the fractional derivative of order $\sigma$ of the function $f$.
  If there exists a function $h\in \LL{1}{\alpha,\beta}$ such that its Fourier--Jacobi expansion has the form
  $$
   h \sim \widehat{f}^{(\alpha,\beta)}_0+ \sum_{k=1}^\infty
  \left(\lambda_k^{(\alpha,\beta)}\right)^{-\sigma}
   \widehat{f}^{(\alpha,\beta)}_k  \psi_k^{(\alpha,\beta)},
  $$
  then we use the notation
 $$
 h= \Oi_\sigma^{(\alpha,\beta)} f
 $$
 and we call $\Oi_\sigma^{(\alpha,\beta)} f$
   the fractional integral of order $\sigma$ of the function $f$.
 %It follows from Bavinck's paper \cite[Sec. 5, p. 789--790]{Ba-1972}
Notice that  $\Oi_\sigma^{(\alpha,\beta)}$, $\sigma>0$, is a bounded linear operator
on $ \LL{1}{\alpha,\beta}$  (see, e.g.,  \cite[Sec. 5, pp. 789--790]{Ba-1972}).

The $K$-functional corresponding to the differential operator~$\Opp^{(\alpha,\beta)} $ and a real positive number $r$
 is defined by
  \begin{align}\label{K-func}
   &K^r(f, \Opp^{(\alpha,\beta)}_r,t)_{\Lw{p}{\alpha,\beta}}=
  \inf \Big\{\|f-g\|_{\Lw{p}{\alpha,\beta}}+t^{r}\|\Opp^{(\alpha,\beta)}_{r} g\|_{\Lw{p}{\alpha,\beta}}: \
g\in W^{r, (\alpha,\beta)}_{p, (\alpha,\beta)}  \Big\}
  \end{align}
(see  \cite[(1.9)]{Di-1998}),
 where
 $W^{r, (\alpha,\beta)}_{p, (\alpha,\beta)}
 =\Big\{g: g,\,\Opp^{(\alpha,\beta)}_{r} g\in \LL{p}{\alpha,\beta}\Big\}.$

\medskip
\medskip
 %{One Ulyanov-type inequality for fractional $K$-functional.}
The main result of this paper is the following %theorem.
 \begin{theorem} \label{thU1short}
{\it
  Let   $1< p<q < \infty,$ $r>0,$   $\alpha \ge \beta >-1,$ $\alpha \ge -1/2.$
  Suppose also that
 $$\sigma=(2\alpha+2)\left(\frac1p-\frac1q\right).$$
If~$f\in \LL{p}{\alpha,\beta}$ and
 $$
 \int_0^1 \left(u^{-\sigma} K^{r+\sigma}(f, \Opp^{(\alpha,\beta)}_{r+\sigma},u)_{\Lw{p}{\alpha,\beta}} \right)^{q} \frac{du}{u}<\infty,
 $$
 then $f\in \LL{q}{\alpha,\beta}$
and
  \begin{equation*}\label{thU1_01short}
  K^r(f, \Opp^{(\alpha,\beta)}_r,t)_{\Lw{q}{\alpha,\beta}}\le C
     \left(
    \int_0^t \left(u^{-\sigma} K^{r+\sigma}(f, \Opp^{(\alpha,\beta)}_{r+\sigma},u)_{\Lw{p}{\alpha,\beta}} \right)^{q}
    \frac{du}{u}
    \right)^{1/q}.
   \end{equation*}
  }
\end{theorem}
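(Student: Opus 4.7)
The proof combines three ingredients: the Hardy--Littlewood fractional-integration estimate $\|\Oi^{(\alpha,\beta)}_\sigma g\|_{\Lw{q}{\alpha,\beta}}\lesssim\|g\|_{\Lw{p}{\alpha,\beta}}$ proved in the preceding sections, a realization of the $K$-functional by de la Vall\'ee Poussin--type Jacobi polynomial near-best approximants $V_n f$ of degree $\lesssim n$, and a dyadic-summation argument that upgrades the trivial $\ell^1$ bound to the $\ell^q$ integrated form on the right-hand side.

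Concretely, fix $t\in(0,1]$, let $n\asymp 1/t$, and pick $V_n f$ near-realizing $K^{r+\sigma}(f,\Opp^{(\alpha,\beta)}_{r+\sigma},1/n)_{\Lw{p}{\alpha,\beta}}$. Taking $V_n f$ as a test element in the $L_q$ $K$-functional,
\[
K^r(f,\Opp^{(\alpha,\beta)}_r,t)_{\Lw{q}{\alpha,\beta}}\le \|f-V_n f\|_{\Lw{q}{\alpha,\beta}}+ t^r\|\Opp^{(\alpha,\beta)}_r V_n f\|_{\Lw{q}{\alpha,\beta}}.
\]
Since $\Opp^{(\alpha,\beta)}_r V_n f=\Oi^{(\alpha,\beta)}_\sigma\Opp^{(\alpha,\beta)}_{r+\sigma}V_n f$ on all nonzero Jacobi modes, the Hardy--Littlewood estimate combined with the realization gives $t^r\|\Opp^{(\alpha,\beta)}_r V_n f\|_{\Lw{q}{\alpha,\beta}}\lesssim t^{-\sigma}K^{r+\sigma}(f,\Opp^{(\alpha,\beta)}_{r+\sigma},t)_{\Lw{p}{\alpha,\beta}}$, which is absorbed into the target RHS by the monotonicity of $K^{r+\sigma}$ in its scale argument.

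The remaining approximation term is expanded via the dyadic telescoping $f-V_n f=\sum_{j\ge 0}\Delta_j$ with $\Delta_j:=V_{2^{j+1}n}f-V_{2^j n}f$, a polynomial of degree $\lesssim 2^{j+1}n$ satisfying $\|\Delta_j\|_{\Lw{p}{\alpha,\beta}}\lesssim K^{r+\sigma}(f,\Opp^{(\alpha,\beta)}_{r+\sigma},2^{-j}t)_{\Lw{p}{\alpha,\beta}}$. Deriving from the $(L_p,L_q)$-boundedness of $\Oi^{(\alpha,\beta)}_\sigma$ the polynomial Nikolskii inequality
\[
\|P\|_{\Lw{q}{\alpha,\beta}}\lesssim \|\Opp^{(\alpha,\beta)}_\sigma P\|_{\Lw{p}{\alpha,\beta}}+|\widehat P^{(\alpha,\beta)}_0|\lesssim m^\sigma\|P\|_{\Lw{p}{\alpha,\beta}}
\]
for $\deg P\le m$ (the first step uses $P-\widehat P^{(\alpha,\beta)}_0\psi^{(\alpha,\beta)}_0=\Oi^{(\alpha,\beta)}_\sigma\Opp^{(\alpha,\beta)}_\sigma P$, the second is the Jacobi Bernstein inequality), one obtains $\|\Delta_j\|_{\Lw{q}{\alpha,\beta}}\lesssim(2^j/t)^\sigma K^{r+\sigma}(f,\Opp^{(\alpha,\beta)}_{r+\sigma},2^{-j}t)_{\Lw{p}{\alpha,\beta}}$. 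To sum these dyadic blocks in the $\ell^q$ norm (rather than in the trivial $\ell^1$ norm of the triangle inequality), one appeals to the Littlewood--Paley/Marcinkiewicz multiplier equivalence for Jacobi--Fourier expansions on $\LL{q}{\alpha,\beta}$, which holds for $1<q<\infty$ under the hypotheses $\alpha\ge\beta>-1$, $\alpha\ge -1/2$; a Hardy-type discretization then identifies the resulting geometric dyadic sum with $\big(\int_0^t (u^{-\sigma}K^{r+\sigma}(f,\Opp^{(\alpha,\beta)}_{r+\sigma},u)_{\Lw{p}{\alpha,\beta}})^q\,du/u\big)^{1/q}$.

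The principal obstacle is precisely this final summation step: the passage from an $\ell^1$ dyadic estimate to the genuine $\ell^q$ bound is what distinguishes the \emph{sharp} Ulyanov inequality \eqref{sharp} from the classical one \eqref{ul}, and it is the place where the Jacobi Littlewood--Paley machinery and the weight restriction $\alpha\ge -1/2$ are actually used; everything preceding it is a straightforward combination of the Hardy--Littlewood estimate with standard polynomial realization.
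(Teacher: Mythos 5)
Your overall architecture coincides with the paper's: reduce to $t\asymp 1/n$, use the realization of the $K$-functional by a near-best polynomial approximant, handle the term $t^{r}\|\Opp^{(\alpha,\beta)}_{r}P_{n,f}\|_{\Lw{q}{\alpha,\beta}}$ by writing $\Opp^{(\alpha,\beta)}_{r}P_{n,f}=\Oi^{(\alpha,\beta)}_{\sigma}\Opp^{(\alpha,\beta)}_{r+\sigma}P_{n,f}$ and applying the Hardy--Littlewood estimate (Theorem~D), and then control $\|f-P_{n,f}\|_{\Lw{q}{\alpha,\beta}}$ by an $\ell^{q}$-type sum of the $L_p$ errors. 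The first two ingredients are fine and match the paper. The gap is in the step you yourself single out as the principal obstacle: the passage from the dyadic blocks to the $\ell^{q}$ sum. Littlewood--Paley theory for Jacobi expansions gives $\bigl\|\sum_{j}\Delta_{j}\bigr\|_{\Lw{q}{\alpha,\beta}}\asymp\bigl\|\bigl(\sum_{j}|\Delta_{j}|^{2}\bigr)^{1/2}\bigr\|_{\Lw{q}{\alpha,\beta}}$, and from the square function the best generic bound (Minkowski's inequality in $L_{q/2}$ when $q\ge 2$) is $\bigl(\sum_{j}\|\Delta_{j}\|_{\Lw{q}{\alpha,\beta}}^{2}\bigr)^{1/2}$, i.e.\ an $\ell^{\min(q,2)}$ sum of the block norms. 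For $1<q\le 2$ this is indeed $\le\bigl(\sum_{j}\|\Delta_{j}\|_{q}^{q}\bigr)^{1/q}$ and your argument closes; but for $q>2$ it is strictly weaker than the required $\ell^{q}$ bound, and no "Hardy-type discretization" can repair it: taking $\|\Delta_{j}\|_{\Lw{q}{\alpha,\beta}}\asymp a_{j}$ with $a_{j}=j^{-1/q-\delta}$ (compatible with the monotonicity of the errors, since $2^{-j\sigma}a_{j}$ decreases) gives $\sum_{j}a_{j}^{2}=\infty$ while $\sum_{j}a_{j}^{q}<\infty$ for small $\delta>0$ and $q>2$. So the square-function route produces an infinite upper bound exactly in a regime where the theorem asserts a finite one.

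What the paper does at this point is different in substance: it invokes the inequality $\|f-P_{n,f}\|_{\Lw{q}{a,b}}\le C\bigl(\sum_{k=n}^{\infty}k^{q\sigma-1}\|f-P_{k,f}\|_{\Lw{p}{a,b}}^{q}\bigr)^{1/q}$ from \cite[Theorem~4.1, (4.6)']{Di-Ti-2005}, whose only hypothesis is the weighted Nikolskii inequality $\|P_{n}\|_{\Lw{q}{a,b}}\le Cn^{(2a+2)(1/p-1/q)}\|P_{n}\|_{\Lw{p}{a,b}}$. The proof of that result does not go through a square function at all; the $\ell^{q}$ exponent comes from a weak-type/real-interpolation mechanism in the spirit of DeVore--Riemenschneider--Sharpley, which is precisely what distinguishes the sharp Ulyanov inequality from the classical $\ell^{1}$ one. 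To complete your proof you would either need to import that result (after verifying the Nikolskii inequality for $\LL{p}{\alpha,\beta}$, which the paper takes from Daugavet--Rafal'son) or supply an argument of that type for $q>2$; as written, your proof is only valid for $1<q\le 2$. A secondary, smaller point: your derivation of the Nikolskii inequality from "$\Oi^{(\alpha,\beta)}_{\sigma}$ bounded plus the Jacobi Bernstein inequality $\|\Opp^{(\alpha,\beta)}_{\sigma}P\|_{\Lw{p}{\alpha,\beta}}\lesssim m^{\sigma}\|P\|_{\Lw{p}{\alpha,\beta}}$" uses a fractional Bernstein inequality that itself requires a multiplier-theorem proof; the paper sidesteps this by citing the Nikolskii inequality directly.
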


\medskip
The rest of the paper is organized as follows.
 In Section 2 we obtain the key result  to get sharp Ulyanov inequalities --  the weighted inequalities of
 Hardy--Littlewood and Landau type for functions defined on the interval $[-1,1]$.
 Section 3 contains the definition of fractional $K$-functionals with Jacobi weights and sharp Ulyanov inequalities for $K$-functionals
 (Theorem~\ref{thU1}).
  In Section 4 analogous results for the Ditzian--Totik moduli of smoothness are obtained.
  Namely, we study a relationship between these moduli and the corresponding $K$-functionals
  and prove sharp Ulyanov inequalities for the Ditzian--Totik moduli in the case of $1\le p\le q\le \infty$ (Theorem~\ref{thU}).

\medskip

 \section{Inequalities for fractional integrals with Jacobi weights}
 \subsection{Landau-type inequalities}

We will need the following Hardy-type inequality (see, e.g., \cite{br} and \cite[Theorem 6.2, Example 6.8]{Op-Ku-90}).
 We set $\frac 1q:=0$ for $q=\infty$.
%   is the main ingredient for the proof of several statements below.
  \begin{theoA}\label{cor1R}
  {\it Let $1 \le p \le q \le\infty,$ $(p,q)\neq (\infty,\infty),$ $a>-\frac1q,$
 $\overline{x}\in (0,\infty)$. Then the inequality
  \begin{equation*}
     \left\|f(x) x^a\right\|_{\Ls{q}{[0,\overline{x}]}}\le C(p,q,a,\overline{x})\left\|f'(x) x^{a+h}\right\|_{\Ls{p}{[0,\overline{x}]}}
  \end{equation*}
  holds for any locally absolutely continuous function $f$  on $(0,\overline{x}]$
  with the property
${f(\overline{x})=0}$ if and only if
 $h \le 1-\left(\frac1p-\frac 1q\right).$
}
    \end{theoA}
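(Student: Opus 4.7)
The plan is to reduce the stated weighted inequality to a standard Hardy inequality on a finite interval and then read off the sharp exponent for $h$ from a direct evaluation of the Muckenhoupt--Bradley admissibility constant.

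First, because $f$ is locally absolutely continuous on $(0,\overline{x}]$ with $f(\overline{x})=0$, the fundamental theorem of calculus yields $|f(x)|\le \int_x^{\overline{x}}|f'(t)|\,dt$. Setting $g=|f'|$, the claimed inequality becomes the dual Hardy-type estimate
$$\left(\int_0^{\overline{x}} x^{aq}\left(\int_x^{\overline{x}} g(t)\,dt\right)^{q}dx\right)^{1/q} \le C\left(\int_0^{\overline{x}} t^{(a+h)p}g(t)^{p}\,dt\right)^{1/p},$$
with the usual modifications if $p=1$ or $q=\infty$. At this point the problem is purely a question about a weighted Hardy inequality on $[0,\overline{x}]$, and the assumption $f(\overline{x})=0$ has been used up.

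Next, I would invoke the standard characterization of such weighted Hardy inequalities on an interval (cf.\ \cite[Theorem~6.2]{Op-Ku-90} and \cite{br}): in the range $1\le p\le q\le\infty$ with $(p,q)\ne(\infty,\infty)$, the inequality above is equivalent to
$$A := \sup_{0<x<\overline{x}}\left(\int_0^{x} s^{aq}\,ds\right)^{1/q}\left(\int_x^{\overline{x}} s^{-(a+h)p'}\,ds\right)^{1/p'}<\infty,$$
where $p'$ is the conjugate exponent to $p$ (with the essential supremum replacing the $L^{p'}$-integral at $p=1$). The hypothesis $a>-1/q$ is exactly what makes the first factor finite, and it equals a constant multiple of $x^{a+1/q}$.

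The heart of the argument is then to evaluate $A$ and detect the sharp threshold on $h$. I would split into three regimes: if $(a+h)p'<1$ the second integral is bounded on $(0,\overline{x})$ and $A$ is trivially finite; if $(a+h)p'=1$ it equals $\log(\overline{x}/x)$, which is still absorbed by the vanishing factor $x^{a+1/q}$; and if $(a+h)p'>1$ the second integral is comparable to $x^{(1-(a+h)p')/p'}$ as $x\to 0^+$, so the product inside the supremum is comparable to $x^{1-h-(1/p-1/q)}$. This stays bounded at the origin exactly when $h\le 1-(1/p-1/q)$, which proves sufficiency. Conversely, testing the original inequality with $g_\varepsilon(t)=t^{-(a+h)p'-\varepsilon}\chi_{(\delta,\overline{x}/2)}(t)$ and letting $\delta\to 0^+$ (after integrating to a primitive $f$ that vanishes at $\overline{x}$) shows that when $h>1-(1/p-1/q)$ the right-hand side remains finite while the left-hand side diverges, giving necessity. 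The main obstacle is the singular regime $(a+h)p'>1$, where the exact power $x^{1-h-(1/p-1/q)}$ must be tracked to extract the sharp cutoff; the endpoint cases $p=1$ and $q=\infty$ reduce to the same computation after the standard notational changes in the Muckenhoupt condition.
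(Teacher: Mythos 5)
The paper does not prove Theorem~\ref{cor1R}: it is quoted from \cite{br} and \cite[Theorem 6.2, Example 6.8]{Op-Ku-90}, and your argument is essentially a reconstruction of that source. The reduction to the adjoint Hardy operator $g\mapsto\int_x^{\overline{x}}g$ via $f(x)=-\int_x^{\overline{x}}f'(t)\,dt$, the Muckenhoupt--Bradley constant $A$ for the power weights $u(x)=x^{aq}$, $v(t)=t^{(a+h)p}$, and the sufficiency computation are all correct: the first factor is $\asymp r^{a+1/q}$ precisely because $a>-1/q$, and in the only delicate regime $(a+h)p'>1$ the product is $\asymp r^{1-h-(1/p-1/q)}$, bounded iff $h\le 1-(1/p-1/q)$. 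One point worth making explicit is that the hypothesis $a>-1/q$ guarantees the regimes do not interfere: if $(a+h)p'\le 1$ then $h\le 1-\frac1p-a<1-\bigl(\frac1p-\frac1q\bigr)$, so finiteness of $A$ is genuinely equivalent to $h\le 1-(1/p-1/q)$ and not to a weaker disjunction.

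The one step that does not work as written is the necessity. When $h>1-(1/p-1/q)$ one has, by the observation above, $(a+h)p'>1$, so for your test function $g_\varepsilon(t)=t^{-(a+h)p'-\varepsilon}\chi_{(\delta,\overline{x}/2)}(t)$ the right-hand side equals $\bigl(\int_\delta^{\overline{x}/2}t^{-(a+h)p'-\varepsilon p}\,dt\bigr)^{1/p}$, whose integrand has exponent $<-1$; it therefore \emph{diverges} as $\delta\to 0^+$, contrary to your claim that it ``remains finite while the left-hand side diverges.'' Two clean repairs are available. (i) Invoke the converse half of the weighted Hardy characterization you already cited (test with $g=v^{1-p'}\chi_{(r,\overline{x})}$ to get $A\le C$), and then your case analysis of $A$ forces $h\le 1-(1/p-1/q)$. (ii) Use the pure power $g(t)=t^{\gamma}$ with $\gamma\in\bigl(-\tfrac1p-a-h,\,-1-a-\tfrac1q\bigr)$, an interval that is nonempty exactly when $h>1-(1/p-1/q)$: the first inequality makes $\|g(t)t^{a+h}\|_{L_p[0,\overline{x}]}$ finite, while $\int_x^{\overline{x}/2}g\asymp x^{\gamma+1}$ together with the second inequality makes $\|f(x)x^a\|_{L_q[0,\overline{x}]}$ infinite. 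With either fix the proof is complete and matches the cited result.
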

\noindent  Let us mention that the quantity $C(p,q,a,\overline{x})$
is nondecreasing with respect to $\overline{x}$.

 The following Landau--type inequality  can be found in, e.g.,
  \cite[Ch. 2, Th. 5.6, p.~38]{DeVoreLorentz1993}.
    \begin{theoA}\label{TL}
{\it  For $1\le p\le \infty,$  $\ell\ge 2,$  there is a constant $C(\ell)$
  such that for  all ${r=0,\ldots,\ell}$ and any function  $f$ with
 $f^{(\ell-1)}$ absolutely continuous
on $\left[-\frac12,\frac12\right]$ and
${f^{(\ell)}\in\Ls{p}{\left[-\frac12,\frac12\right]}}$ we have
  \begin{equation*}
  \left\|f^{(r)}\right\|_{\Ls{p}{\left[-\frac12,\frac12\right]}}\le C(\ell)
   \left(\|f\|_{\Ls{p}{\left[-\frac12,\frac12\right]}}+ \left\|f^{(\ell)}\right\|_{\Ls{p}{\left[-\frac12,\frac12\right]}}  \right).
  \end{equation*}
  }
  \end{theoA}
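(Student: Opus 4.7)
\medskip

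\noindent\textbf{Proposal.} My plan is to combine a polynomial-approximation estimate of Bramble--Hilbert / Whitney type (giving an $L_p$ error controlled by $\|f^{(\ell)}\|_p$) with Markov's inequality for polynomials on the fixed interval $[-\frac12,\frac12]$. Both tools are one-variable and unweighted, which fits the setting of the statement, and both apply uniformly in $1\le p\le\infty$.

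First I would build an approximating polynomial $P$ of degree $\le\ell-1$ via the averaged Taylor polynomial. Fix a non-negative $C^\infty$ bump $\phi$ supported in $[-\frac12,\frac12]$ with $\int\phi=1$ and set
\[
P(x)=\int_{-1/2}^{1/2}\phi(y)\sum_{j=0}^{\ell-1}\frac{f^{(j)}(y)}{j!}(x-y)^j\,dy.
\]
Using Taylor's formula with integral remainder and Fubini, $f(x)-P(x)$ admits a Peano representation
\[
f(x)-P(x)=\frac{1}{(\ell-1)!}\int_{-1/2}^{1/2}K_\ell(x,t)\,f^{(\ell)}(t)\,dt,
\]
where the kernel $K_\ell$ is built from the truncated power $(x-t)_+^{\ell-1}$ and the distribution function of $\phi$. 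Differentiating $r$ times in $x$ (for $0\le r\le\ell-1$) lowers the degree of the truncated power and still produces a kernel bounded on $[-\frac12,\frac12]\times[-\frac12,\frac12]$ by a constant depending only on $\ell$ and $\phi$; the case $r=\ell$ simply recovers $f^{(\ell)}$. Minkowski's integral inequality then yields
\[
\|(f-P)^{(r)}\|_{\Ls{p}{[-\frac12,\frac12]}}\le C(\ell)\,\|f^{(\ell)}\|_{\Ls{p}{[-\frac12,\frac12]}},\qquad 0\le r\le\ell.
\]

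Next, since $P$ is a polynomial of degree $\le\ell-1$ on a fixed interval, all norms on this finite-dimensional space are equivalent, so Markov's inequality in $L_p$ gives
\[
\|P^{(r)}\|_{\Ls{p}{[-\frac12,\frac12]}}\le C(\ell)\,\|P\|_{\Ls{p}{[-\frac12,\frac12]}},\qquad 0\le r\le\ell-1.
\]
Writing $f^{(r)}=P^{(r)}+(f-P)^{(r)}$ and combining,
\[
\|f^{(r)}\|_p\le C(\ell)\bigl(\|P\|_p+\|f^{(\ell)}\|_p\bigr)\le C(\ell)\bigl(\|f\|_p+\|f-P\|_p+\|f^{(\ell)}\|_p\bigr)\le C(\ell)\bigl(\|f\|_p+\|f^{(\ell)}\|_p\bigr),
\]
which is the asserted inequality; the endpoints $r=0$ and $r=\ell$ are immediate.

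The main obstacle I expect is the derivative-compatible Bramble--Hilbert estimate: one must check that after taking up to $\ell-1$ derivatives in $x$, the resulting Peano kernel against $f^{(\ell)}$ remains uniformly bounded on $[-\frac12,\frac12]^2$, with a constant depending only on $\ell$. This is a careful but routine manipulation of the Taylor remainder. An alternative route, which sidesteps constructing $P$ entirely, is the classical Kolmogorov iteration: prove the base case $\|f'\|_p\le C(\|f\|_p+\|f''\|_p)$ via a two-point Taylor expansion with optimized step size $h$, and then iterate through intermediate orders using Young's inequality to absorb cross terms.
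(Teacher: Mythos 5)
The paper does not prove this statement: it is quoted as a known Landau-type inequality with a citation to DeVore--Lorentz, \emph{Constructive Approximation}, Ch.~2, Th.~5.6, so there is no internal proof to compare against. Your argument is a correct, self-contained proof, and it is essentially the standard one. The Peano-kernel step works exactly as you anticipate: with $\Phi(t)=\int_{-1/2}^{t}\phi$, the kernel is $K_\ell(x,t)=(x-t)^{\ell-1}\chi(x,t)$ with $\chi(x,t)=\Phi(t)$ for $t<x$ and $\chi(x,t)=-(1-\Phi(t))$ for $t>x$; since the jump of $\chi$ across $t=x$ is multiplied by $(x-t)^{\ell-1}$, the kernel is $C^{\ell-2}$ in $x$ and $\partial_x^{r}K_\ell(x,t)=\tfrac{(\ell-1)!}{(\ell-1-r)!}(x-t)^{\ell-1-r}\chi(x,t)$ is bounded by $\tfrac{(\ell-1)!}{(\ell-1-r)!}$ for all $r\le\ell-1$, so $\|(f-P)^{(r)}\|_{p}\le C(\ell)\|f^{(\ell)}\|_{1}\le C(\ell)\|f^{(\ell)}\|_{p}$ on the unit-length interval, and $r=\ell$ is trivial. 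The only point to state more carefully is the uniformity in $p$ of your Markov step: ``all norms are equivalent'' gives a constant that a priori depends on $p$, but the chain $\|P^{(r)}\|_{p}\le\|P^{(r)}\|_{\infty}\le C(\ell)\|P\|_{\infty}\le C(\ell)\|P\|_{1}\le C(\ell)\|P\|_{p}$ (Markov plus the Nikolskii inequality for the fixed finite-dimensional space $\Pi_{\ell-1}$, together with H\"older on an interval of length $1$) yields a constant depending only on $\ell$, as required. With that remark the proof is complete; the alternative Kolmogorov iteration you mention would also work but is not needed.
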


  As a corollary of Theorem~\ref{cor1R} and Theorem~\ref{TL} we get
\begin{lemma}\label{le1}
  Suppose that $1 \le p \le q \le\infty,$ $(p,q)\neq (\infty,\infty),$
  $a, b >-\frac1q,$ $c,d>-\frac1p,$
$r$ is a nonnegative integer$,$ $k$ is a positive integer$,$ and
  $$
     h=k-\left(\frac1p-\frac1q\right).
  $$
      Then$,$ there exists a~constant $C=C(p,q,a,b,c,d,r,k)$
 such that for any function $f$
with  $f^{(r+k-1)}$ absolutely continuous on $(-1,1)$ and  $f^{(r+k)} \w{a+h}{b+h}\in L_p$
we have
   \begin{equation}\label{MH1}
     \left\| f^{(r)} \w{a}{b}  \right\|_{q}\le
     C\left(\left\|f \w{c}{d}  \right \|_{p}+
     \left\|f^{(r+k)} \w{a+h}{b+h}  \right\|_{p}\right).
  \end{equation}
 Inequality~\eqref{MH1} is sharp in the following sense.
 If $a-c<r+\left(\frac1p-\frac1q\right) ,$ then for any $\varepsilon>0$  there exists  $\{f_n\}\subset C^{k+r}[-1,1]$
 such that
 \begin{equation}
 \label{f7}
  \left\|  f_n^{(r)} \w{a}{b} \right\|_{q}\cdot
 \left( \left\|   f_n \w{c}{d} \right\|_{1}+
 \left\| f_n^{(r+k)} \w{a+h+\varepsilon}{b+h} \right\|_{p}\right)^{-1}\to\infty\quad\mbox{as}\quad n\to \infty.
  \end{equation}
  The analogous statement also holds with respect to  the parameter $b$.
  \end{lemma}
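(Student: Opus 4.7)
The plan is to prove \eqref{MH1} by decomposing $[-1,1]$ into the interior $I_0 = [-1/2,1/2]$ and the two endpoint subintervals $I_+ = [1/2,1]$ and $I_- = [-1,-1/2]$, treating $I_0$ via Theorem~\ref{TL} and each endpoint by an iterated application of Theorem~\ref{cor1R}. On $I_0$ all three Jacobi weights are bounded above and below by positive constants, so it suffices to prove the unweighted estimate
\[
\|f^{(r)}\|_{L_q[I_0]} \le C\bigl(\|f\|_{L_p[I_0]} + \|f^{(r+k)}\|_{L_p[I_0]}\bigr).
\]
This follows from the one-dimensional $L_\infty$-bound $\|f^{(r)}\|_{L_\infty[I_0]} \le C(\|f^{(r)}\|_{L_p[I_0]} + \|f^{(r+1)}\|_{L_p[I_0]})$ (a consequence of the fundamental theorem of calculus and H\"older) followed by Theorem~\ref{TL} with $\ell = r+k$, plus the trivial embedding $L_\infty \hookrightarrow L_q$ on a bounded interval.

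The heart of the argument is the endpoint estimate on $I_+$, where $(1+x)^b \asymp 1$ and the effective weight is $(1-x)^a$. After the substitution $u = 1-x$ and writing $\tilde f(u) = f(1-u)$, the task becomes to bound $\|\tilde f^{(r)}(u)\, u^a\|_{L_q[0,1/2]}$. Introduce a cutoff $\chi \in C^\infty(\mathbb R)$ with $\chi \equiv 1$ on $[0,1/4]$ and $\chi \equiv 0$ on $[3/8,1/2]$, and set $G = \chi \tilde f$; then $G^{(j)}(1/2) = 0$ for every $j \ge 0$. Apply Theorem~\ref{cor1R} iteratively to $G$: first with parameters $(p,q)$ and exponent $h_1 = 1-(1/p-1/q)$, then $k-1$ further times with parameters $(p,p)$ and exponent $1$. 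This yields
\[
\|G^{(r)}\, u^a\|_{L_q[0,1/2]} \le C\, \|G^{(r+k)}\, u^{a+h}\|_{L_p[0,1/2]}, \qquad h = k - (1/p - 1/q),
\]
the weight-exponent condition of Theorem~\ref{cor1R} being satisfied at each step since $a > -1/q$ implies $a + h_1 + j > -1/p$ for all $j \ge 0$. Expanding $G^{(r+k)}$ by the Leibniz rule and using that $\chi^{(j)}$ is supported in $[1/4,3/8]$ for $j \ge 1$ splits the right-hand side into the desired endpoint piece $\|\tilde f^{(r+k)}\, u^{a+h}\|_{L_p[0,1/4]}$ plus interior terms absorbed by the interior estimate; the transition from $\|G^{(r)} u^a\|_{L_q[0,1/2]}$ to $\|\tilde f^{(r)} u^a\|_{L_q[0,1/2]}$ is also an interior issue, since $G^{(r)} = \tilde f^{(r)}$ on $[0,1/4]$ and $u^a$ is bounded on $[1/4,1/2]$. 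The interval $I_-$ is handled symmetrically via $u = 1+x$, using $b > -1/q$.

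For the sharpness claim \eqref{f7}, fix $\eta \in C_c^\infty((1,2))$ not identically zero and set $f_n(x) = \eta(n(1-x))$, supported in $[1-2/n, 1-1/n] \subset I_+$ for all large $n$. A direct scaling computation yields
\[
\|f_n^{(r)} \w{a}{b}\|_q \asymp n^{r-a-1/q}, \quad \|f_n \w{c}{d}\|_1 \asymp n^{-c-1}, \quad \|f_n^{(r+k)} \w{a+h+\varepsilon}{b+h}\|_p \asymp n^{r-a-1/q-\varepsilon},
\]
the last identity using $h = k - (1/p-1/q)$. Under the hypothesis $a - c < r + (1/p - 1/q)$, both exponents $\varepsilon$ and $r - (a-c) + 1 - 1/q > 1 - 1/p \ge 0$ are strictly positive, so the ratio in \eqref{f7} tends to infinity; the analogous statement for the parameter $b$ follows by reflection.

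The main obstacle will be the bookkeeping in the iterated Hardy step: at every iteration one must verify the exponent condition of Theorem~\ref{cor1R} (above $-1/p$ after the first step) and check that the remainder terms produced by the Leibniz expansion of $(\chi\tilde f)^{(r+k)}$ are genuinely interior, i.e.\ supported away from the endpoint and hence controlled by the interior estimate. The smooth cutoff $\chi$, which makes $G$ flat near $u = 1/2$, is precisely what keeps the boundary-vanishing hypothesis of Theorem~\ref{cor1R} trivially satisfied at every step and should make the bookkeeping routine rather than delicate.
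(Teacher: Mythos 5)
Your proposal is correct and follows essentially the same route as the paper: both arguments combine the Hardy-type Theorem~\ref{cor1R} near the endpoints with the Landau-type Theorem~\ref{TL} in the interior, the only real differences being how the vanishing condition required by Theorem~\ref{cor1R} is arranged (you multiply by a smooth cutoff, whereas the paper subtracts the value $f^{(r)}(\overline{x})$ at the point of $[-1/2,1/2]$ minimizing $|f^{(r)}|$ and applies the Hardy inequality on $[-1,\overline{x}]$ and $[\overline{x},1]$), that you iterate the Hardy inequality $k$ times directly while the paper proves the case $k=1$ and inducts, and that your sharpness example is a scaled bump $\eta(n(1-x))$ rather than the paper's truncated power $\big((x+1/n-1)_+\big)^m$ (the scaling computations coincide, and your version even avoids the paper's preliminary reduction to small $\varepsilon$). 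The only bookkeeping point worth flagging is that your Leibniz remainder terms are supported on $x\in[5/8,3/4]$ rather than in $I_0=[-1/2,1/2]$, so the interior Landau estimate must be invoked on a general compact subinterval of $(-1,1)$ (and, when $r+k=1$, in the elementary form $\|f\|_{\infty}\le C(\|f\|_{1}+\|f'\|_{1})$ since Theorem~\ref{TL} is stated for $\ell\ge 2$); both adjustments are harmless.
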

\begin{proof}[Proof of Lemma~\ref{le1}]
It is enough to verify inequality~\eqref{MH1} for $k=1$. The proof in the general case is by induction on $k$.
    Note that $f^{(r)}$ is continuous on $\left[-\frac12,\frac12\right]$ by our assumption. We take
   $\overline{x}\in \left[-\frac12,\frac12\right]$ such that
  $$
  \left|f^{(r)}(\overline{x})\right|= \min\Big\{\left|f^{(r)}(x)\right|: x\in \left[-\tfrac12,\tfrac12\right]\Big\}.
  $$
   Let   $g(x)=f^{(r)}(x)-f^{(r)}(\overline{x})$, then
   \begin{align*}
   \left\| f^{(r)}\w{a}{b}  \right\|_{q}&
   \le \left\| g\w{a}{b}  \right\|_{q}+\left|f^{(r)}(\overline{x})\right|  \left\| \w{a}{b}\right\|_{q}
   \\
    &\le \left\| g\w{a}{b}  \right\|_{\Ls{q}{[-1,\overline{x}]}}
    +\left\|g \w{a}{b} \right\|_{\Ls{q}{[\overline{x},1]}}+\left|f^{(r)}(\overline{x})\right|  \left\| \w{a}{b}\right\|_{\Ls{q}{[-1,1]}}.
    \end{align*}
    To estimate the first term, we  apply Theorem~\ref{cor1R}
     (for the interval $[-1,\overline{x}]$ instead of $[0,\overline{x}]$) with ${h=1-\left(\frac1p-\frac1q\right)}$:
    \begin{align*}
    \left\|  g\w{a}{b} \right\|_{\Ls{q}{[-1,\overline{x}]}}&
    \le 2^{|a|}\left\| g(x)(1+x)^{b} \right\|_{\Ls{q}{[-1,\overline{x}]}} \le
     2^{|a|} C\left\| g'(x)(1+x)^{b+h} \right\|_{\Ls{q}{[-1,\overline{x}]}}
    \\
    &\le 2^{|a|+|a+h|} C
    \left\|g'(x) (1-x)^{a+h} (1+x)^{b+h} \right\|_{\Ls{p}{[-1,\overline{x}]}}
    \\
    &\le 2^{|a|+|a+h|} C \left\|g' \w{a+h}{b+h} \right\|_{\Ls{p}{[-1,1]}}
    = 2^{|a|+|a+h|} C \left\|
    f^{(r+1)}
    \w{a+h}{b+h} \right\|_{\Ls{p}{[-1,1]}}.
    \end{align*}
 A similar estimate holds for $\left\| g\w{a}{b}\right\|_{\Ls{q}{[\overline{x},1]}}$ as well.

    To estimate $\left|f^{(r)}(\overline{x})\right|$, we apply Theorem~\ref{TL}:
    \begin{align*}
    \left|f^{(r)}(\overline{x})\right|&\le \left\|f^{(r)}\right\|_{\Ls{1}{ \left[-\tfrac12,\tfrac12\right]}} \le
    C\left(
           \left\|f\right\|_{\Ls{1}{ \left[-\tfrac12,\tfrac12\right]}}+\left\|f^{(r+1)}\right\|_{\Ls{1}{ \left[-\tfrac12,\tfrac12\right]}}
       \right)
      \\
    &\le  2^{|c|+|d|+|a+h|+|b+h|}  C
    \left(\left\|f \w{c}{d}\right\|_{\Ls{p}{[-1,1]}}+
    \left\|f^{(r+1)} \w{a+h}{b+h}\right\|_{\Ls{p}{[-1,1]}}
    \right),
    \end{align*}
    where $C$ depends only on $r+1$.
Thus, \eqref{MH1} follows.

Let us now show~\eqref{f7}. Since for any $0\le \varepsilon_1\le \varepsilon_2$ the estimate
  $$  \w{a+h+\varepsilon_2}{b+h}(x)\le 2^{\varepsilon_2-\varepsilon_1}
   \w{a+h+\varepsilon_1}{b+h}(x), \qquad x\in[-1,1],
  $$ holds,
 we can assume
  \begin{equation}\label{varepsc}
0<\varepsilon \le c-a+r+1/p-1/q.
 \end{equation}
 For  $m>r+k$, consider the sequence of functions
$$
 f_n(x)=\big((x+1/n-1)_+\big)^{m}, \quad  x\in\left[-1,1\right],
 \qquad
 y_+=\max\{y,0\}.
 $$
 It is easy to verify that
  if $\mu\ge 0$ and  $\nu > -1/q$, then
 $$
 \left\| \big((1/n-1+x)_+\big)^\mu (1-x)^\nu \right\|_{q}
 \asymp \frac{1}{n^{\mu+\nu+1/q}}\quad \mbox{ as }\quad  n\to \infty.
 $$
 Here  $A_n\asymp B_n$ as $n\to \infty$ means that
 $B_n/C\le A_n\le C B_n$ for some positive constant $C$ and all $n$.
 Using this, we get
 \begin{align*}
 &\left\| f_n \w{c}{d} \right\|_{p} \asymp \frac{1}{n^{m+c+1/p}},
 \qquad
  \left\| f_n^{(r)}  \w{a}{b} \right\|_{q} \asymp
    \frac{1}{n^{m-r+a+1/q}},
\\
 &\left\| f_n^{(r+k)} \w{a+h+\varepsilon}{b+h}\right\|_{p}\asymp
  \frac{1}{n^{m-r-k+a+h+\varepsilon+1/p}}=\frac{1}{n^{m-r+a+\varepsilon+1/q}}.
 \end{align*}
 Under  assumption~\eqref{varepsc} we have
  $$
  \left\| f_n  \w{c}{d} \right\|_{p} +\left\| f_n^{(r+k)} \w{a+h+\varepsilon}{b+h} \right\|_{p} \asymp
  \frac{1}{n^{m-r+a+\varepsilon+1/q} }, $$
 and therefore,
$$
 \frac{ \left\| f_n^{(r)} \w{a}{b} \right\|_{q}}
       {\left\| f_n  \w{c}{d} \right\|_{p} +\left\| f_n^{(r+k)} \w{a+h+\varepsilon}{b+h} \right\|_{p}}
  \asymp  n^\varepsilon  \quad \mbox{ as } \quad n\to \infty,
 $$
concluding the proof.
   \end{proof}

%%%%%%%%%%%%%%%%%%%%%%%%%%%%%%%%%%%%%%%
%%%%%%%%%%%%%%%%%%%%%%%%%%%%%%%%%%%%%%%

   \subsection{Hardy--Littlewood type inequalities}
To prove Hardy--Littlewood type inequalities for the fractional integral $\Oi^{(\alpha,\beta)}_{\sigma}$, we will use  the Muckenhoupt transplantation theorem \cite[Collorary 17.11]{Mu-1987}, which is  written in our notation as follows.
   \begin{theoA}\label{ThMu}{\it
   If $1<\overline{p}\le\overline{ q}<\infty,$ \
   ${\overline{\alpha},\,\overline{\beta},\, \overline{\gamma},\, \overline{\delta}>-1},$ \
   $\ds {\overline{a},\,\overline{b},\,}\overline{c},\,\overline{d} >-1,$ \
   $$
      s=\frac1{\overline{p}}-\frac 1{\overline{q}},
  $$
   $$
      \frac{\overline{a}}{\overline{q}}=\frac{\overline{c}}{\overline{p}}+\frac{\overline{\alpha}-\overline{\gamma}}{2}+
     \frac12\left(\frac1{\overline{p}}-\frac1{\overline{q}}\right),  \quad
      \frac{\overline{b}}{\overline{q}}=\frac{\overline{d}}{\overline{p}}+\frac{\overline{\beta}-\overline{\delta}}{2}+
       \frac12\left(\frac1{\overline{p}}-\frac1{\overline{q}}\right),
   $$
   the quantities
   $\overline{ A}=(\overline{c}+1)/\overline{p}-\overline{\gamma}$   and
   $\overline{ B}=(\overline{d}+1)/\overline{p}-\overline{\delta} $
  are not positive integers, $M=\max\{0,[\overline{A}]\}$, $N=\max\{0,[\overline{B}]\}$,
  $f \in \LL{\overline{p}}{\overline{c},\overline{d}}$,
   $$
   {\widehat{f}}^{(\overline{\gamma},\overline{\delta})}_k=0, \quad 0\le k\le M+N-1,
  $$
  $h$ is an integer$,$ $\cm{k}$ has the form
  \begin{equation*}
    \cm{k}=\sum_{j=0}^{J-1}c_j(k+1)^{-s-j}+O\left((k+1)^{-s-J}\right)
 \end{equation*}
  with $J\ge \overline{\alpha}+\overline{\beta}+\overline{\gamma}+\overline{\delta}+6+2M+2N$
  and $0\le \rho<1$, then
  \begin{equation*}
  T_\rho f(x)=\sum_{k=0}^\infty \rho^k \cm{k}
  \widehat{f}_k^{(\overline{\gamma},\overline{\delta})}
   \psi^{(\overline{\alpha},\overline{\beta})}_{k+h}(x)
  \end{equation*}
  converges for every $x \in(-1,1),$
   \begin{equation*}
   \left\|T_\rho f \right\|_{\Lw{\overline{q}}{\overline{a},\overline{b}}} \le C
   \left\|f \right\|_{\Lw{\overline{p}}{\overline{c},\overline{d}}} ,
   \end{equation*}
   where
   $C$ is independent of $\rho$ and $f$. Moreover$,$
  there is a function $Tf$ in $\LL{\overline{q}}{\overline{a},\overline{b}}$ such that $T_\rho f$ converges to $Tf$
  in $\LL{\overline{q}}{\overline{a},\overline{b}}$ as $\rho\to 1-$.
  If it is also assumed that
  {$ \overline{a}+1<(\overline{\alpha}+1)\overline{q}$}
  and
  {$\overline{b}+1<(\overline{\beta}+1)\overline{q},$}
  then
  \begin{equation*}
  \widehat{Tf}^{(\overline{\alpha},\overline{\beta})}_k
  =\begin{cases}
  0, & 0\le k \le h-1 \\
  \cm{k-h}
  \widehat{f}^{(\overline{\gamma},\overline{\delta})}_{k-h} , & \max(0,h)\le k.
  \end{cases}
 \end{equation*}
  }
  \end{theoA}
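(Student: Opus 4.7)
My plan is to realize $T_\rho$ as an integral operator on $(-1,1)$ and reduce its weighted $L^{\overline p}\to L^{\overline q}$ boundedness to weighted inequalities for a fractional-integration-type operator on $(0,\pi)$, using the classical Hilb and Bessel asymptotics for Jacobi polynomials. Write
$$
T_\rho f(x) = \int_{-1}^1 K_\rho(x,y)\, f(y)\, \w{\overline\gamma}{\overline\delta}(y)\, dy,
\quad
K_\rho(x,y) = \sum_{k=0}^\infty \rho^k \cm{k}\, \psi_k^{(\overline\gamma,\overline\delta)}(y)\, \psi_{k+h}^{(\overline\alpha,\overline\beta)}(x),
$$
and change variables $x=\cos\theta$, $y=\cos\varphi$. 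The Hilb asymptotic replaces $\psi_n^{(\alpha,\beta)}(\cos\theta)$ by the product of a boundary factor $(\sin(\theta/2))^{-\alpha-1/2}(\cos(\theta/2))^{-\beta-1/2}$ and a cosine in $n\theta$. The two balance relations among $\overline a,\overline b,\overline c,\overline d,\overline\alpha,\overline\beta,\overline\gamma,\overline\delta$ are exactly what is needed to absorb those boundary factors, together with the Jacobian $\sin\theta\,d\theta$, and to match the measure $\w{\overline a}{\overline b}(x)\,dx$ on the target side against $\w{\overline c}{\overline d}(y)\,dy$ on the source side. This reduces the problem to an unweighted $L^{\overline p}\to L^{\overline q}$ estimate on $(0,\pi)$ for an operator whose kernel is a smooth bounded multiplier times the trigonometric sum $\sum_k \rho^k \cm{k}\, e^{i(k+h)\theta}e^{-ik\varphi}$.

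Using the assumed expansion $\cm{k} = \sum_{j<J} c_j(k+1)^{-s-j} + O((k+1)^{-s-J})$ and Abel summation, I split this trigonometric kernel into a principal term $c_0\sum_k \rho^k (k+1)^{-s} e^{i(k+h)\theta}e^{-ik\varphi}$, which is essentially a Cauchy/Weyl fractional integral of order $s=1/\overline p-1/\overline q$, plus finitely many smoother symbols $c_j(k+1)^{-s-j}$, plus a remainder. The principal term obeys the required $L^{\overline p}\to L^{\overline q}$ bound by the Hardy--Littlewood inequality for fractional integrals; the smoother pieces are handled by the same inequality applied to lower-order symbols; and once $J \ge \overline\alpha+\overline\beta+\overline\gamma+\overline\delta+6+2M+2N$ the remainder produces a kernel bounded by a $\w{\overline a}{\overline b}$-integrable function which is estimated directly by H\"older. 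Near the endpoints $\theta,\varphi \in \{0,\pi\}$ the Hilb expansion degenerates and one must instead use the Bessel-type asymptotic $\psi_n^{(\alpha,\beta)}(\cos(z/n)) \approx c_\alpha z^{-\alpha} J_\alpha(z)$; there the strength of the boundary singularity is measured precisely by $\overline A = (\overline c+1)/\overline p - \overline\gamma$ and $\overline B = (\overline d+1)/\overline p - \overline\delta$, and the hypothesis $\widehat f_k^{(\overline\gamma,\overline\delta)} = 0$ for $0\le k\le M+N-1$ is used to subtract out the finitely many Jacobi projections whose contribution to the boundary kernel would otherwise be too singular. The non-integer condition on $\overline A, \overline B$ excludes logarithmic borderline cases in the resulting Bessel integrals.

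Granted the $\rho$-uniform bound, convergence as $\rho\to 1-$ follows by a standard density/Fatou argument: the bound is preserved in the limit, convergence is trivial on polynomials, and density extends it to all $f\in\LL{\overline p}{\overline c,\overline d}$. To identify $\widehat{Tf}_k^{(\overline\alpha,\overline\beta)}$, pair $T_\rho f$ with $\psi_k^{(\overline\alpha,\overline\beta)}\w{\overline\alpha}{\overline\beta}$; the extra hypotheses $\overline a+1 < (\overline\alpha+1)\overline q$ and $\overline b+1 < (\overline\beta+1)\overline q$ put this test function in the K\"othe dual of $\LL{\overline q}{\overline a,\overline b}$, so one may pass the limit through the integral, and the defining series yields $\cm{k-h}\widehat f_{k-h}^{(\overline\gamma,\overline\delta)}$ for $k\ge \max(0,h)$ and zero otherwise. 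The main obstacle in this program is the endpoint bookkeeping: matching the Bessel-regime kernel bounds to the precise powers dictated by $M$ and $N$, and tracking the effect of the index shift $h$ on the oscillatory phase, is where the delicate dependence on $\overline a,\overline b,\overline c,\overline d$ really enters, and it is what forces the technical assumptions $\overline A,\overline B\notin \mathbb{Z}_{>0}$ and the prescribed vanishing of the first $M+N$ Fourier--Jacobi coefficients of $f$.
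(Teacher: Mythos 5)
This statement is the paper's Theorem~C, which is not proved in the paper at all: it is quoted verbatim (up to notation) from Muckenhoupt's memoir \cite[Corollary 17.11]{Mu-1987}, where its proof occupies essentially the whole memoir. So there is no in-paper argument to compare against; your attempt has to be judged as a reconstruction of Muckenhoupt's proof.

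As such, your outline does identify the correct architecture: the kernel representation, the change of variables $x=\cos\theta$, the Hilb asymptotics in the interior and the Bessel (Mehler--Heine) asymptotics near the endpoints, the role of the two balance relations in absorbing the boundary factors $(\sin(\theta/2))^{-\overline\alpha-1/2}(\cos(\theta/2))^{-\overline\beta-1/2}$ together with the Jacobian, the interpretation of the multiplier $(k+1)^{-s}$ as fractional integration of order $s$, and the use of the vanishing of the first $M+N$ Fourier--Jacobi coefficients to remove the projections that are too singular at the endpoints. This is faithful to how the transplantation theorem is actually proved. But what you have written is a program, not a proof, and you concede as much in the last paragraph: none of the kernel estimates are carried out, the decomposition into the regimes (both variables interior, one near an endpoint, both near endpoints, near and far from the diagonal) is not made precise, and the claim that the threshold $J\ge\overline\alpha+\overline\beta+\overline\gamma+\overline\delta+6+2M+2N$ makes the remainder kernel integrable is asserted rather than checked. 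There is also one point where the stated tool does not cover the stated hypotheses: the theorem allows $\overline p=\overline q$, i.e.\ $s=0$ (and the paper applies it repeatedly with $s=0$, e.g.\ in Steps 1.3 and 2.1 of Theorem~2 and in Theorem~5). In that case the principal term is not a fractional integral but a transplantation/singular-integral operator of Calder\'on--Zygmund type, and its weighted $L^{\overline p}\to L^{\overline p}$ boundedness requires weighted norm inequalities for singular integrals ($A_p$-type conditions on the power weights), not the Hardy--Littlewood inequality for fractional integrals that you invoke. Your concluding density argument and the duality computation identifying $\widehat{Tf}^{(\overline\alpha,\overline\beta)}_k$ under the conditions $\overline a+1<(\overline\alpha+1)\overline q$, $\overline b+1<(\overline\beta+1)\overline q$ are correct as stated.
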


The next Hardy--Littlewood inequality
is a simple corollary of Theorem~\ref{ThMu}.
 {
 \begin{corollary}\label{r2}
  Let   $1 < p < q <\infty,$  $-1/2\ge a \ge b > -1,$ $ \alpha \ge \beta>-1,$
  {$(a+1)<(\alpha+1)p,$ $(b+1)<(\beta+1)p,$} and
   $$
    \sigma \ge \frac{1}{p}-\frac1q.
   $$
Let also {$f\in \LL{p}{a,b}.$}
  Then  there exists $C$  independent of $f$ such that
  \begin{equation}\label{hard-li}
  \left\|\Oi^{(\alpha,\beta)}_{\sigma}f\right\|_{\Lw{q}{a,b}}\le C
  \big\| f     \vphantom{I^{(\alpha,\beta)}_{\sigma}}    \big\|_{\Lw{p}{a,b}}.
  \end{equation}
 \end{corollary}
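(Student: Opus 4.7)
The plan is to deduce Corollary~\ref{r2} directly from the Muckenhoupt transplantation theorem (Theorem~\ref{ThMu}) applied with no transplantation and no shift. Set $\overline{\alpha}=\overline{\gamma}=\alpha$, $\overline{\beta}=\overline{\delta}=\beta$, $h=0$, and multiplier $\cm{k}=(\lambda_k^{(\alpha,\beta)})^{-\sigma}$ for $k\ge 1$ (setting $\cm{0}=1$ to match the fact that $\Oi_\sigma^{(\alpha,\beta)}$ leaves the constant mode alone). Since $\lambda_k^{(\alpha,\beta)}=(k(k+\alpha+\beta+1))^{1/2}$, expanding $(\lambda_k^{(\alpha,\beta)})^{-\sigma}$ as a power series in $1/k$ and converting to powers of $1/(k+1)$ yields an asymptotic expansion $\cm{k}=\sum_{j=0}^{J-1}c_j(k+1)^{-s-j}+O((k+1)^{-s-J})$ with $s=\sigma$ and $J$ as large as we wish. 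This forces the relation $1/\overline{p}-1/\overline{q}=\sigma$ in Theorem~\ref{ThMu}.

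Assume first that $\sigma<1/p$. I take $\overline{p}=p$ and $\overline{q}=q^\ast$ with $1/q^\ast=1/p-\sigma$; the hypothesis $\sigma\ge 1/p-1/q$ gives $q^\ast\ge q$, so H\"older on the bounded interval $[-1,1]$ yields the embedding $\LL{q^\ast}{a,b}\hookrightarrow\LL{q}{a,b}$. With $\overline{\alpha}=\overline{\gamma}$ the weight relations in Theorem~\ref{ThMu} reduce to $\overline{a}/\overline{q}-\overline{c}/\overline{p}=\sigma/2$ and its $b,d$-analogue; setting $\overline{a}=a$, $\overline{b}=b$ then forces
\[
\overline{c}=\tfrac{p}{q^\ast}\bigl(a+\tfrac12\bigr)-\tfrac12,\qquad
\overline{d}=\tfrac{p}{q^\ast}\bigl(b+\tfrac12\bigr)-\tfrac12.
\]
Since $a,b\le -1/2$ and $p/q^\ast\le 1$, one has $\overline{c}\ge a$ and $\overline{d}\ge b$, giving the trivial embedding $\LL{p}{a,b}\hookrightarrow\LL{p}{\overline{c},\overline{d}}$; a direct arithmetic check using $p<q^\ast$ and $a,b>-1$ gives $\overline{c},\overline{d}>-1$. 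Chaining these embeddings with Theorem~\ref{ThMu} produces
\[
\|\Oi_\sigma^{(\alpha,\beta)}f\|_{\Lw{q}{a,b}}
\le C\|\Oi_\sigma^{(\alpha,\beta)}f\|_{\Lw{q^\ast}{a,b}}
\le C\|f\|_{\Lw{p}{\overline{c},\overline{d}}}
\le C\|f\|_{\Lw{p}{a,b}},
\]
which is exactly~\eqref{hard-li}.

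The remaining range $\sigma\ge 1/p$ is handled by a composition: write $\sigma=\sigma_1+\cdots+\sigma_m$ with each $\sigma_j<1/p$, observe $\Oi_\sigma^{(\alpha,\beta)}=\Oi_{\sigma_1}^{(\alpha,\beta)}\circ\cdots\circ\Oi_{\sigma_m}^{(\alpha,\beta)}$ on the orthogonal complement of constants (the constant mode being trivially handled by the embedding $\LL{p}{a,b}\hookrightarrow\LL{1}{\alpha,\beta}$, which is exactly what the hypothesis $(a+1)<(\alpha+1)p$ provides via H\"older with the conjugate exponent), and iterate the preceding step along a chain $\LL{p}{a,b}\to\LL{q_1}{a,b}\to\cdots\to\LL{q}{a,b}$ of intermediate targets. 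The main technical obstacle lies in checking the side hypotheses of Theorem~\ref{ThMu}: the quantities $\overline{A}=(\overline{c}+1)/p-\alpha$ and $\overline{B}=(\overline{d}+1)/p-\beta$ must avoid the positive integers (a generic condition enforced by an arbitrarily small perturbation of the parameters, since the constant in~\eqref{hard-li} depends continuously on them), and one needs the vanishing $\widehat{f}_k^{(\alpha,\beta)}=0$ for $0\le k\le M+N-1$, which is dealt with by splitting $f$ into its projection onto the first $M+N$ Jacobi polynomials (a finite-dimensional term whose $\Lw{q}{a,b}$-norm is trivially $\le C\|f\|_{\Lw{p}{a,b}}$ by the same H\"older embedding) plus the orthogonal remainder, on which Theorem~\ref{ThMu} applies directly.
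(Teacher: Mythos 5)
Your overall strategy is the right one and, as far as one can tell, is exactly what the authors intend: the paper gives no written proof of Corollary~\ref{r2} beyond the remark that it is a simple corollary of Theorem~\ref{ThMu}, and your reduction --- apply Theorem~\ref{ThMu} with $\overline{\alpha}=\overline{\gamma}=\alpha$, $\overline{\beta}=\overline{\delta}=\beta$, $h=0$, $\cm{k}=\bigl(\lambda_k^{(\alpha,\beta)}\bigr)^{-\sigma}$, target exponent $q^{\ast}$ defined by $1/q^{\ast}=1/p-\sigma$, followed by the monotone embeddings $\LL{q^{\ast}}{a,b}\hookrightarrow\LL{q}{a,b}$ and $\LL{p}{a,b}\hookrightarrow\LL{p}{\overline{c},\overline{d}}$ --- is the natural way to carry that out. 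Your weight computation is correct and correctly isolates where $a,b\le-1/2$ enters (namely $\overline{c}-a=(a+\tfrac12)(p/q^{\ast}-1)\ge 0$), and splitting off the first $M+N$ Jacobi modes, whose coefficients are bounded functionals on $\LL{p}{a,b}$ precisely because $(a+1)<(\alpha+1)p$ and $(b+1)<(\beta+1)p$, is the right way to dispose of the vanishing-coefficient hypothesis.

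Two points need repair. First, your justification for avoiding the hypothesis that $\overline{A},\overline{B}$ not be positive integers --- that the constant in \eqref{hard-li} depends continuously on the parameters --- is backwards: you cannot invoke continuity of a constant in an inequality you have not yet established at the offending parameters. The correct fix is to perturb within the one-parameter family permitted by the weight relation: replace $(\overline{a},\overline{c})=(a,\overline{c})$ by $(a-\varepsilon q^{\ast}/p,\ \overline{c}-\varepsilon)$, which keeps $\overline{a}\le a$ and, when $a<-1/2$ strictly, still satisfies $\overline{c}-\varepsilon\ge a$ for small $\varepsilon>0$ because $\overline{c}-a=(a+\tfrac12)(p/q^{\ast}-1)>0$; in the boundary case $a=-1/2$ there is no room to perturb, but then $\overline{A}=\tfrac1{2p}-\alpha$, and $\overline{A}$ being a positive integer would force $\alpha\le\tfrac1{2p}-1$, contradicting $(a+1)<(\alpha+1)p$, so the condition holds automatically (similarly for $b$ and $\overline{B}$). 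Second, you should justify the identification of the Muckenhoupt limit $Tf$ with $\Oi^{(\alpha,\beta)}_{\sigma}f$: this uses the final clause of Theorem~\ref{ThMu}, whose extra hypotheses $\overline{a}+1<(\overline{\alpha}+1)\overline{q}$ and $\overline{b}+1<(\overline{\beta}+1)\overline{q}$ do hold here since $a+1<(\alpha+1)p<(\alpha+1)q^{\ast}$, together with uniqueness of Fourier--Jacobi expansions in $\LL{1}{\alpha,\beta}$; alternatively, argue on polynomials and pass to the limit as the paper does in the proof of Theorem~\ref{thHI}. With these repairs the argument, including the composition $\Oi^{(\alpha,\beta)}_{\sigma}=\Oi^{(\alpha,\beta)}_{\sigma_1}\circ\cdots\circ\Oi^{(\alpha,\beta)}_{\sigma_m}$ for $\sigma\ge 1/p$ (take each $\sigma_j<1/q$, so that $\sigma_j$ is below the reciprocal of every intermediate source exponent), is sound.
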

}

In the special case $(\alpha,\beta)=(a,b)$,
 the Hardy--Littlewood inequality (\ref{hard-li})
  %for the operator $\Oi^{(\alpha,\beta)}_{\sigma}$
 was studied
 by Askey and Wainger \cite[Sec. J]{As-Wa-1969} (see also \cite{ask}) and later by
  Bavinck and Trebels  \cite[Theorem 5.4]{Ba-1972}, \cite[Theorems 1 and 1']{Ba-Tr-1978}.

  \begin{theoA}[\textrm{\cite{As-Wa-1969, Ba-Tr-1978}}] \label{ThBa}
  {\it Let $1<p<q<\infty,$  $a\ge b >-1,$ $a+b\ge-1,$
  and
 \begin{equation*}\label{condsigma}
  \sigma \ge (2a+2)\left(\frac{1}{p}-\frac{1}{q}\right) .
 \end{equation*}
 If $f\in \LL{p}{a,b},$ then $\Oi_\sigma^{(a, b)} f\in \LL{q}{a, b}$ and
  $$
  \left\| \Oi_\sigma^{(a, b)} f \right\|_{\Lw{q}{a, b}}\le C(p,q,a, b)
  \big\|f\big\|
  _{\Lw{p}{a, b}}.
  $$}
 \end{theoA}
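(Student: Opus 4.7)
My plan is to view Theorem~\ref{ThBa} as a weighted Hardy--Littlewood--Sobolev (HLS) embedding and follow the Askey--Wainger strategy of reducing everything to a kernel estimate. Under the hypotheses $a\ge b>-1$ and $a+b\ge -1$, the interval $[-1,1]$ with measure $\w{a}{b}(x)\,dx$ and the ``Jacobi distance'' $d(x,y)=|\arccos x-\arccos y|$ is a doubling metric measure space whose local volume is $\asymp r^{2a+2}$ near $x=1$ and $\asymp r^{2b+2}$ near $x=-1$; since $a\ge b$, the dominant homogeneous dimension is $2a+2$, and $\sigma_{\mathrm{cr}}:=(2a+2)(1/p-1/q)$ is the HLS critical exponent for this dimension.

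\textbf{Step 1 (kernel representation and estimate).} I would write
\[
\Oi_\sigma^{(a,b)}f(x)=\int_{-1}^1 K_\sigma(x,y)f(y)\w{a}{b}(y)\,dy,\quad K_\sigma(x,y)=\sum_{k\ge 1}\bigl(\lambda_k^{(a,b)}\bigr)^{-\sigma}\psi_k^{(a,b)}(x)\psi_k^{(a,b)}(y),
\]
first handling the series via the Abel means $T_\rho f$ as in Theorem~\ref{ThMu} and then passing to $\rho\to 1-$. Using Hilb-type asymptotics for $\psi_k^{(a,b)}$ (Bessel-function main terms near the endpoints $\pm 1$ and trigonometric main terms in the interior), one derives the pointwise bound
\[
|K_\sigma(x,y)|\lesssim d(x,y)^{\sigma-(2a+2)}
\]
(with appropriate weight-balancing correction factors near $\pm 1$).

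\textbf{Step 2 (HLS on a space of homogeneous type, plus extension to $\sigma>\sigma_{\mathrm{cr}}$).} With such a kernel bound, $\Oi_{\sigma_{\mathrm{cr}}}^{(a,b)}$ becomes a Riesz-type fractional integral on the doubling space $\bigl([-1,1],d,\w{a}{b}(x)\,dx\bigr)$ of homogeneous dimension $2a+2$, and the standard weighted HLS inequality on spaces of homogeneous type yields $\|\Oi_{\sigma_{\mathrm{cr}}}^{(a,b)}f\|_{\Lw{q}{a,b}}\le C\|f\|_{\Lw{p}{a,b}}$. For $\sigma>\sigma_{\mathrm{cr}}$, since $d(x,y)\le \pi$ is bounded, one has $|K_\sigma(x,y)|\lesssim d(x,y)^{\sigma-(2a+2)}\le C\,d(x,y)^{\sigma_{\mathrm{cr}}-(2a+2)}$, so the HLS conclusion persists with a constant depending on $\sigma-\sigma_{\mathrm{cr}}$.

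\textbf{Main obstacle.} The technical core lies entirely in Step~1: the Jacobi asymptotics must be uniform across the three regimes (both endpoints and the interior) and must match the weight factors sharply enough that the resulting bound on $K_\sigma$ is tight at the critical exponent. The hypothesis $a+b\ge -1$ is used precisely at this stage to rule out configurations in which the measure concentrates too strongly at $-1$ relative to its behavior at $+1$; without it, additional singular contributions near $x=-1$ would spoil the uniform kernel bound and change the effective Sobolev exponent.
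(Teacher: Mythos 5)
The first thing to say is that the paper contains no proof of Theorem~\ref{ThBa}: it is imported verbatim from \cite{As-Wa-1969} and \cite{Ba-Tr-1978}, where it is obtained, respectively, from the Jacobi convolution structure (the product formula, whose boundedness/positivity is exactly where $a\ge b$ and $a+b\ge -1$ enter in Askey--Wainger) and from $(L^p,L^q)$ multiplier theorems for Jacobi expansions. So your sketch must be judged on its own merits, and in that judgement there is a genuine gap at its core, namely the kernel bound of Step~1. The estimate $|K_\sigma(x,y)|\lesssim d(x,y)^{\sigma-(2a+2)}$ is calibrated to the endpoint $x=1$, where balls of radius $r$ in the measure $d\mu=\w{a}{b}(x)\,dx$ have volume $\asymp r^{2a+2}$. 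But the space $([-1,1],d,\mu)$ is doubling without being Ahlfors regular: with $\theta=\arccos x$ one has $\mu(B(x,r))\asymp r(\theta+r)^{2a+1}(\pi-\theta+r)^{2b+1}$, so the local dimension in the interior is $1$, not $2a+2$. Consequently your bound, while true, is useless there: for $\sigma\le 2a+1$ (which covers the critical exponent $\sigma=(2a+2)(1/p-1/q)$ whenever $q$ is not far from $p$) the function $y\mapsto d(x,y)^{\sigma-(2a+2)}$ is not locally $\mu$-integrable near an interior point $x$, an operator dominated only by such a kernel need not even be finite on bounded functions, and no HLS theorem on spaces of homogeneous type applies to it. The bound you actually need is $|K_\sigma(x,y)|\lesssim d(x,y)^{\sigma}/\mu\bigl(B(x,d(x,y))\bigr)$, after which the global lower volume bound $\mu(B(x,r))\ge c\,r^{2a+2}$ --- valid on all of $[-1,1]$ precisely because $a\ge b$ and $a+b\ge-1$ force $a\ge -1/2$, i.e.\ $2a+2\ge 1\ge 2b+2$ is the largest of the three local dimensions --- feeds a Hedberg-type argument that yields the $L^p\to L^q$ bound at the critical $\sigma$ and, since $d\le\pi$, above it. (This also corrects your reading of the hypothesis $a+b\ge-1$: the endpoint $-1$ has the \emph{smaller} dimension $2b+2$ and causes no harm; the condition is there so that the interior dimension $1$ does not exceed $2a+2$.)

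There is a second, related gap: even the corrected kernel bound cannot be reached by taking absolute values in Hilb-type asymptotics, because on the diagonal in the interior the series $\sum_{k}(\lambda_k^{(a,b)})^{-\sigma}|\psi_k^{(a,b)}(x)|^2$ diverges for $\sigma\le 1$. One must either exploit the oscillation (summation by parts --- in effect what the convolution-structure proof of \cite{As-Wa-1969} does), or subordinate $K_\sigma$ to the Jacobi heat or Poisson semigroup and invoke sharp Gaussian-type bounds for the latter; the second route is a correct modern proof, but those semigroup bounds are a substantial external input that your sketch neither supplies nor cites. As written, then, the proposal identifies the right critical exponent and the right geometric picture, but both the form of the kernel estimate and the tool needed to prove it must be repaired before Step~2 can be executed.
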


 For $(\alpha,\beta)\neq (a,b)$ we have the following result.

\begin{theorem}\label{thHI}
  Let   $1 < p < q <\infty,$  $a \ge b >-1,$ $a\ge-1/2,$  $ \alpha \ge \beta>-1,$
 \begin{equation}\label{thHI_02}
    p(\alpha-\beta)\le 2(a-b)\le q(\alpha-\beta),
 \end{equation}
  the quantities $ A=(a+1)/p-\alpha$ and
$B=(b+1)/p-\beta$ be not positive integers$,$
and either $\alpha=a,$ or $\alpha > a$ and $q > 2,$
or $\alpha < a$ and $p < 2.$
Let     \begin{equation}\label{thHI_03}
  \sigma\ge (2a+2)\left(\frac1p-\frac 1q\right),
  \end{equation}
 $f\in \LL{p}{a,b}\cap  \LL{1}{\alpha,\beta}$ and
  \begin{equation}\label{thHI_04}
     {\widehat{f}}^{(\alpha,\beta)}_k=0, \quad 0\le k\le \max\left\{0,[A]\right\}+\max\left\{0,[B]\right\}-1.
  \end{equation}
  Then  there exists $C$  independent of $f$ such that
  \begin{equation}\label{thHI_05}
  \left\|\Oi^{(\alpha,\beta)}_{\sigma}f\right\|_{\Lw{q}{a,b}}\le C
  \big\| f     \vphantom{I^{(\alpha,\beta)}_{\sigma}}    \big\|_{\Lw{p}{a,b}}.
  \end{equation}
  \end{theorem}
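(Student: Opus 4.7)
The plan is to deduce Theorem~\ref{thHI} from Muckenhoupt's transplantation theorem (Theorem~\ref{ThMu}), with the diagonal Hardy--Littlewood inequality of Askey--Wainger--Bavinck--Trebels (Theorem~\ref{ThBa}) serving as the known endpoint for $(\alpha,\beta)=(a,b)$, and the off-diagonal case $(\alpha,\beta)\neq(a,b)$ handled by Muckenhoupt's flexibility in shifting Jacobi parameters simultaneously with exponents and weights.

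First I would verify that the symbol $\nu_k:=(\lambda_k^{(\alpha,\beta)})^{-\sigma}$ of the multiplier $\Oi^{(\alpha,\beta)}_\sigma$ fits the asymptotic form demanded by Theorem~\ref{ThMu}: expanding $\lambda_k^{(\alpha,\beta)}=\sqrt{k(k+\alpha+\beta+1)}$ binomially in $1/k$ yields
\begin{equation*}
 \nu_k = \sum_{j=0}^{J-1} c_j (k+1)^{-\sigma - j} + O\bigl((k+1)^{-\sigma-J}\bigr)
\end{equation*}
for every $J\ge 1$, which is the required profile with shift parameter $s=\sigma$. In view of $\sigma\geq(2a+2)(1/p-1/q)$, the Muckenhoupt exponents $(\overline p,\overline q)$ are thereby pinned to $1/\overline p-1/\overline q=\sigma$, and in general differ from $(p,q)$ unless $a=-1/2$.

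Next I would apply Theorem~\ref{ThMu} with $\overline\gamma:=\alpha$, $\overline\delta:=\beta$ (so that the input coefficients are precisely $\widehat f^{(\alpha,\beta)}_k$), $\overline c:=a$, $\overline d:=b$ (matching the source norm $\LL{p}{a,b}$), and $\overline a,\overline b,\overline\alpha,\overline\beta,h$ determined by the two weight-Jacobi compatibility identities of Theorem~\ref{ThMu}. The hypotheses of Theorem~\ref{thHI} should then correspond one-to-one to those of Theorem~\ref{ThMu}: condition~(\ref{thHI_02}) $p(\alpha-\beta)\leq 2(a-b)\leq q(\alpha-\beta)$ forces the intermediate parameters to satisfy $\overline\alpha\geq\overline\beta>-1$; the trichotomy $\alpha=a$, or $\alpha>a$ with $q>2$, or $\alpha<a$ with $p<2$, selects the Pollard-type regime in which Muckenhoupt's constants remain finite; the assumption that $A,B$ are not positive integers matches the same assumption on $\overline A,\overline B$; the vanishing moments~(\ref{thHI_04}) give the required $\widehat f^{(\overline\gamma,\overline\delta)}_k=0$ for $0\leq k\leq M+N-1$; and $a\geq -1/2$ together with the lower bound on $\sigma$ secures the positivity $\overline a+1<(\overline\alpha+1)\overline q$ and its $(\overline b,\overline\beta)$ analogue --- the clause that identifies the Fourier--Jacobi coefficients of the operator~$T$ produced by Theorem~\ref{ThMu}.

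The main obstacle will be the parameter bookkeeping and the identification of $T$ with $\Oi^{(\alpha,\beta)}_\sigma$ on the admissible subspace of $\LL{p}{a,b}$: in general Theorem~\ref{ThMu} places $Tf$ in the \emph{output} basis $(\overline\alpha,\overline\beta)$, which need not equal $(\alpha,\beta)$. I therefore expect either to arrange the constraint identities so that $(\overline\alpha,\overline\beta)=(\alpha,\beta)$ directly, or else to compose with a second application of Theorem~\ref{ThMu} (now with $s=0$ in the $\LL{q}{a,b}$ norm) that transplants the output back to the $(\alpha,\beta)$ basis modulo a bounded multiplier absorbing the ratio $\lambda_k^{(\overline\alpha,\overline\beta)}/\lambda_k^{(\alpha,\beta)}$. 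Concatenating these steps yields the inequality~(\ref{thHI_05}), and the appearance of the Jacobi-weight factor $2a+2$ in the hypothesis on $\sigma$ is seen precisely as the content of the compatibility condition $s=\sigma$ coupled with the weight parameter $a$.
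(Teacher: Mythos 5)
There is a genuine gap, and you have in fact already put your finger on it without resolving it: a single application of Theorem~\ref{ThMu} with the full symbol $\nu_k=(\lambda_k^{(\alpha,\beta)})^{-\sigma}$ forces $1/\overline{p}-1/\overline{q}=s=\sigma=(2a+2)(1/p-1/q)$, so the exponent pair $(\overline{p},\overline{q})$ cannot be $(p,q)$ unless $a=-1/2$ (and the scheme is vacuous for $\sigma>1$). Your plan then sets $\overline{c}=a$, $\overline{d}=b$ ``matching the source norm $\LL{p}{a,b}$'', but the source norm in Theorem~\ref{ThMu} is $\LL{\overline{p}}{\overline{c},\overline{d}}$ with the \emph{wrong} exponent $\overline{p}\ne p$, and the proposed fix --- composing with a second transplantation at $s=0$ --- only changes the Jacobi basis, never the integrability exponent. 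Transplantation alone simply cannot produce the factor $2a+2$: that gain is the content of the weighted Hardy--Littlewood inequality (Theorem~\ref{ThBa}), which you invoke only as ``the known endpoint for $(\alpha,\beta)=(a,b)$'' and then never use in the off-diagonal case, which is precisely where it is indispensable.

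The paper's proof closes this gap by factoring through an intermediate \emph{ultraspherical} space. After reducing to polynomials by density, it splits $\sigma=\sigma_1+(\sigma-\sigma_1)$ and runs three steps: (i) Muckenhoupt with the small shift $s=\sigma_1=1/p-1/p_1$ to pass from $\LL{p}{a,b}$ (coefficients in the $(\alpha,\beta)$ basis) to $\LL{p_1}{\alpha_1,\alpha_1}$ (coefficients in the $(\alpha_1,\alpha_1)$ basis); (ii) Theorem~\ref{ThBa} on the diagonal $(\alpha_1,\alpha_1)$ to absorb the remaining order $\sigma-\sigma_1\ge(2\alpha_1+2)(1/p_1-1/q)$ and land in $\LL{q}{\alpha_1,\alpha_1}$; (iii) Muckenhoupt again with $s=0$ to transplant back to the $(\alpha,\beta)$ basis in $\LL{q}{a,b}$. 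The specific choice $\alpha_1=(q\alpha-2a)/(q-2)$ (resp.\ $\alpha_1=(2a-p\alpha)/(2-p)$ in the symmetric case) is what makes the exponent identity $(2\alpha_1+2)(1/p_1-1/q)+1/p-1/p_1=(2a+2)(1/p-1/q)$ hold, and it is also the true source of the trichotomy ``$\alpha=a$, or $\alpha>a$ and $q>2$, or $\alpha<a$ and $p<2$'' (one needs $q>2$, resp.\ $p<2$, for $\alpha_1$ and the intermediate exponent to exist in the right range), not a ``Pollard-type regime'' for Muckenhoupt's constants. Condition~\eqref{thHI_02} enters to compare the auxiliary weight $\overline{b}$ produced by each transplantation with $\alpha_1$ and $b$ so that the norms can be monotonically replaced. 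Without this intermediate diagonal step your argument cannot reach the stated inequality for any $a>-1/2$.
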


  \begin{proof}
    It is sufficient to prove this theorem for polynomials.
   % \textcolor{red}{Eto poyasnenie nado sokratit' ili voobsche ubrat'.}
    Indeed, suppose that \eqref{thHI_05} holds for polynomials.
    Consider a sequence of polynomials $\{Q_m\}$ convergent to $f$ in $\LL{p}{a,b}$ and
    $\LL{1}{\alpha,\beta}$.
    Then  $\{\Oi_\sigma^{(\alpha,\beta)}Q_m\}$ is a Cauchy sequence in $\LL{q}{a,b}$ and it converges
    to some function $g$  in $\LL{q}{a,b}$.  Without loss of generality we can assume that
     $\{\Oi_\sigma^{(\alpha,\beta)}Q_m\}$  converges  to $g$ a.e. on $[-1,1]$.
     Since the operator $\Oi_\sigma^{(\alpha,\beta)}$ is continuous in $\LL{1}{\alpha,\beta}$, the sequence
    $\{\Oi_\sigma^{(\alpha,\beta)}Q_m\}$ converges   to $\Oi_\sigma^{(\alpha,\beta)}f$  in
    $\LL{1}{\alpha,\beta}$.
    There is a subsequence $\{\Oi_\sigma^{(\alpha,\beta)}Q_{m_j}\}$ convergent to $\Oi_\sigma^{(\alpha,\beta)}f$ a.e. on $[-1,1]$.
 Therefore, $g=\Oi_\sigma^{(\alpha,\beta)}f$.

   Let $f$ be a polynomial, i.e.,
   $$
   f =\sum_{k=0}^\infty c_k \psi^{(\alpha,\beta)}_k,
   $$
   where $c_k={\widehat{f}}^{(\alpha,\beta)}_k$ and $c_k=0$ for $k> \deg (f)$.  % greater than the degree of $f$.

  \underline{Case 1.} Consider  $\alpha \ge a,$ $q \ge 2.$
   More precisely, under assumption of the theorem, the following relations are possible:
    $\alpha > a $ and $q > 2$ or  $ \alpha=a$ and $q\ge 2$.

   Now, we define $\alpha_1$ and  $p_1$.
   If  $\alpha>a,$ then we set
  \begin{align*}
    &\alpha_1=\frac{q\alpha-2a}{q-2},
    \\
    &\frac{\alpha_1}{p_1}=\frac{a}{p}+\frac{\alpha_1-\alpha}{2}+\frac{1}{2}\left(\frac1{p}-\frac1{p_1}\right).
     \end{align*}
In this case, we have
  $$    \frac{2\alpha_1+1}{p_1}=\frac{2a+1}{p}+\frac{2(\alpha-a)}{q-2}
    $$
and
\begin{equation}\label{thHI_p}
 (2\alpha_1+2)\left(\frac{1}{p_1}-\frac1q \right)+\frac1p-\frac{1}{p_1}=
(2a+2)\left(\frac1p-\frac1q \right).
\end{equation}
Notice that condition $\alpha>a$ implies that
   $\alpha_1 > \max\{ a,\alpha, 0\}$ and $p<p_1<q$.
   
   If  $\alpha=a,$ then we set $\alpha_1=\alpha,$ $p_1=p.$

 \vskip 0.3cm

      We divide the rest of the proof in Case 1 into three steps.
       \vskip 0.2cm

\noindent  \underline{Step 1.1}.
   We apply Theorem~\ref{ThMu}
   with $(\overline{q},\overline{p})=(p_1,p)$,  $(\overline{\alpha},\overline{\beta})=(\alpha_1,\alpha_1)$,
   $(\overline{\gamma},\overline{\delta})=(\alpha,\beta)$,
   $(\overline{c},\overline{d})=(a,b)$, $h=0$,
    $s=\sigma_1=\frac{1}{p}-\frac1{p_1}$,
    and
   $$
    \cm{k}=\left( \lambda_k^{(\alpha_1,\alpha_1)}\right)^{-\sigma_1}.
   $$
   Then we have
   $ \overline{a}=\alpha_1,$
   \begin{equation}\label{thHI_1b}
   \frac{\overline{b}}{p_1}=\frac{b}{p}+\frac{\alpha_1-\beta}{2}+\frac12\left(\frac1p-\frac1{p_1}\right)=
   \frac{\alpha_1}{p_1}-\frac{2(a-b)-p(\alpha-\beta)}{2p},
   \end{equation}
   $$
   A=\frac{a+1}{p}-\alpha , \quad  B=\frac{b+1}{p}-\beta.
   $$
    Therefore, under condition~\eqref{thHI_04}  for any $\rho\in(0,1)$, we
    obtain the inequality
   \begin{equation}\label{th2_011}
    \left\|c_0 +\sum_{k=1}^\infty \rho^k\left(\lambda_k^{(\alpha_1,\alpha_1)}\right)^{-\sigma_1}
   c_k \psi^{(\alpha_1,\alpha_1)}_k\right\|_{\Lw{p_1}{\alpha_1,\overline{b}}} \le C \|f\|_{\Lw{p}{a,b}},
   \end{equation}
   where $C$ is independent of $f$ and $\rho$.
    Since $f$ is a polynomial, the sum is finite, and  we can rewrite~\eqref{th2_011} as
   \begin{equation*}
    \left\|c_0+\sum_{k=1}^\infty \left(\lambda_k^{(\alpha_1,\alpha_1)}\right)^{-\sigma_1}
   c_k \psi^{(\alpha_1,\alpha_1)}_k\right\|_{\Lw{p_1}{\alpha_1,\overline{b}}} \le C \|f\|_{\Lw{p}{a,b}}.
   \end{equation*}
   Relations  \eqref{thHI_02} and \eqref{thHI_1b} show that $\alpha_1\ge \overline{b}$, and hence,
   \begin{equation}\label{th2_05}
   \left\|c_0+\sum_{k=1}^\infty \left(\lambda_k^{(\alpha_1,\alpha_1)}\right)^{-\sigma_1}
   c_k \psi^{(\alpha_1,\alpha_1)}_k\right\|_{\Lw{p_1}{\alpha_1,\alpha_1}} \le C \|f\|_{\Lw{p}{a,b}}.
   \end{equation}
\vskip 0.2cm

\noindent
  \underline{Step 1.2}. In view of~\eqref{thHI_03} and \eqref{thHI_p}, we have
     $$
    \sigma-\sigma_1\ge (2\alpha_1+2)\left(\frac{1}{p_1}-\frac{1}{q} \right),
    $$ we can apply  Theorem~\ref{ThBa}
    for the pair of spaces $\LL{q}{\alpha_1,\alpha_1}$
   and
   $\LL{p_1}{\alpha_1,\alpha_1}$
 to get
    \begin{equation}\label{th2_06}
     \left\|c_0+\sum_{k=1}^\infty \left(\lambda_k^{(\alpha_1,\alpha_1)}\right)^{-\sigma}
   c_k \psi^{(\alpha_1,\alpha_1)}_k   \right\|_{\Lw{q}{\alpha_1,\alpha_1}}\!\!\!
    \le C  \left\|c_0+\sum_{k=1}^\infty \left(\lambda_k^{(\alpha_1,\alpha_1)}\right)^{-\sigma_1}
   c_k \psi^{(\alpha_1,\alpha_1)}_k\right\|_{\Lw{p_1}{\alpha_1,\alpha_1}}.
   \end{equation}
   \vskip 0.2cm

\noindent
  \underline{Step 1.3}.
   We use Theorem~\ref{ThMu} once again
   with $(\overline{q},\overline{p})=(q,q)$,  $(\overline{\alpha},\overline{\beta})
   =(\alpha,\beta)$,
   $(\overline{\gamma},\overline{\delta})=(\alpha_1,\alpha_1)$,
   $(\overline{c},\overline{d})=(\alpha_1,\alpha_1)$,
   and
   $$
    \cm{k}=\left(\lambda_k^{(\alpha,\beta)}/ \lambda_k^{(\alpha_1,\alpha_1)}\right)^{-\sigma}.
   $$
   Then $s=0$, $\overline{a}=a,$
   \begin{equation}\label{thHI_3b-}
   \frac{\overline{b}}{q}=\frac{\alpha_1}{q}+\frac{\beta-\alpha_1}{2}=
   \frac{b}{q}-\frac{q(\alpha-\beta)-2(a-b)}{2q},
   \end{equation}
   and
   $$
   A=B=\frac{\alpha_1+1}{q}-\alpha_1=
\alpha_1\left( \frac1q-1\right)+\frac1q \le
  -\frac12\left( \frac1q-1\right)+\frac1q < 1, \quad [A]=[B]=0.
   $$
    We have
    \begin{equation*}
     \left\|  c_0+ \sum_{k=1}^\infty\left(\lambda_k^{(\alpha,\beta)}\right)^{-\sigma}
               c_k \psi^{(\alpha,\beta)}_k
        \right\|_{\Lw{q}{a,\overline{b}}}\le
   C\left\|c_0+\sum_{k=1}^\infty \left(\lambda_k^{(\alpha_1,\alpha_1)}\right)^{-\sigma}
   c_k \psi^{(\alpha_1,\alpha_1)}_k \right\|_{\Lw{q}{\alpha_1,\alpha_1}}.
   \end{equation*}
    Relations  \eqref{thHI_02} and \eqref{thHI_3b-} show  that $\overline{b}\le b,$ and hence,
        \begin{equation}\label{th2_07}
       \left\|  c_0+ \sum_{k=1}^\infty\left(\lambda_k^{(\alpha,\beta)}\right)^{-\sigma}
               c_k \psi^{(\alpha,\beta)}_k
        \right\|_{\Lw{q}{a,b}}\le 2^{b-\overline{b}}
     \left\|  c_0+ \sum_{k=1}^\infty\left(\lambda_k^{(\alpha,\beta)}\right)^{-\sigma}
               c_k \psi^{(\alpha,\beta)}_k
        \right\|_{\Lw{q}{a,\overline{b}}}.
   \end{equation}
 Finally, combining \eqref{th2_05}, \eqref{th2_06}, and
\eqref{th2_07},  we obtain inequality \eqref{thHI_05}.

\bigskip

\underline{Case 2.} Consider  $\alpha \le a,$ $p \le 2.$
   More precisely, under assumption of the theorem, the following relations are possible:
    $\alpha < a $ and $p< 2$ or  $ \alpha=a$ and $p\le 2$.

   Now, we define $\alpha_1$ and  $q_1$.
   If  $\alpha <a,$ then we set
  \begin{align*}
    &\alpha_1=\frac{2a-p\alpha}{2-p},
    \\
    &\frac{a}{q}=\frac{\alpha_1}{q_1}+\frac{\alpha-\alpha_1}{2}+
    \frac{1}{2}\left(\frac1{q_1}-\frac1{q}\right).
     \end{align*}
In this case, we have
   $$    \frac{2\alpha_1+1}{q_1}=\frac{2a+1}{q}+\frac{2(a-\alpha)}{2-p}
    $$
and
\begin{equation}\label{thHI_2p}
 (2\alpha_1+2)\left(\frac1p-\frac1{q_1} \right)+\frac1{q_1}-\frac{1}{q}=
(2a+2)\left(\frac1p-\frac1q \right).
\end{equation}
Notice that condition $\alpha <a$ implies that
   $\alpha_1 > \max\{ a,\alpha, 0\}$ and $p<q_1<q$.

   If  $\alpha=a,$ then we set $\alpha_1=\alpha,$ $q_1=q.$

 \vskip 0.3cm

We can argue similarly to the proof in Case 1 dividing the rest of the proof  into three steps.
%Similarly to the proof in Case 1,  we divide the rest of the proof  into three steps.
       \vskip 0.2cm

\noindent  \underline{Step 2.1}.
   We are going to use Theorem~\ref{ThMu}
   with $(\overline{q},\overline{p})=(p,p)$,
  $(\overline{\alpha},\overline{\beta})=(\alpha_1,\alpha_1)$,
   $(\overline{\gamma},\overline{\delta})=(\alpha,\beta)$,
   $(\overline{c},\overline{d})=(a,b)$, $h=0$,
    $s=0$,
    and
   $
    \cm{k}=1.
   $
Then 
   $ \overline{a}=\alpha_1,$
   \begin{equation}\label{thHI_21b}
   \frac{\overline{b}}{p}=\frac{b}{p}+\frac{\alpha_1-\beta}{2}=
   \frac{\alpha_1}{p}-\frac{2(a-b)-p(\alpha-\beta)}{2p},
   \end{equation}
   $$
   A=\frac{a+1}{p}-\alpha , \quad  B=\frac{b+1}{p}-\beta.
   $$
    Therefore, under condition~\eqref{thHI_04}  for any $\rho\in(0,1)$, we
    obtain the inequality
   \begin{equation}\label{thHI_2_1}
    \left\|c_0 +\sum_{k=1}^\infty \rho^k   c_k \psi^{(\alpha_1,\alpha_1)}_k\right\|_{\Lw{p}{\alpha_1,\overline{b}}} \le C \|f\|_{\Lw{p}{a,b}},
   \end{equation}
   where $C$ does not depend on $f$ and $\rho$.
    Since $f$ is a polynomial, the sum is finite.
    Taking into account  \eqref{thHI_02} and \eqref{thHI_21b},
    we conclude that $\alpha_1\ge \overline{b}$, and hence,
      and  we can rewrite~\eqref{thHI_2_1} as
   \begin{equation}\label{thHI_2_2}
   \left\|c_0+\sum_{k=1}^\infty
   c_k \psi^{(\alpha_1,\alpha_1)}_k\right\|_{\Lw{p}{\alpha_1,\alpha_1}} \le C \|f\|_{\Lw{p}{a,b}}.
   \end{equation}
\vskip 0.2cm

\noindent
  \underline{Step 2.2}.
 Set $\sigma_1=\sigma-\left(\frac1{q_1}-\frac1{q} \right)$.
In view of~\eqref{thHI_03} and \eqref{thHI_2p}, we have
     $$
    \sigma_1\ge (2\alpha_1+1)\left(\frac{1}{p}-\frac{1}{q_1}\right).
    $$
 We  can apply  Theorem~\ref{ThBa}
    for the pair of spaces $\LL{q_1}{\alpha_1,\alpha_1}$
   and
   $\LL{p}{\alpha_1,\alpha_1}$
 to get
    \begin{equation}\label{thHI_2_3}
     \left\|c_0+\sum_{k=1}^\infty \left(\lambda_k^{(\alpha_1,\alpha_1)}\right)^{-\sigma_1}
   c_k \psi^{(\alpha_1,\alpha_1)}_k   \right\|_{\Lw{q_1}{\alpha_1,\alpha_1}}\!\!\!
    \le C  \left\|c_0+\sum_{k=1}^\infty  c_k \psi^{(\alpha_1,\alpha_1)}_k\right\|_{\Lw{p}{\alpha_1,\alpha_1}}.
   \end{equation}
   \vskip 0.2cm

\noindent
  \underline{Step 2.3}.
  We use Theorem~\ref{ThMu} once again
   with $(\overline{q},\overline{p})=(q,q_1)$,  $(\overline{\alpha},\overline{\beta})
   =(\alpha,\beta)$,
   $(\overline{\gamma},\overline{\delta})=(\alpha_1,\alpha_1)$,
   $(\overline{c},\overline{d})=(\alpha_1,\alpha_1)$,
   and
   $$
    \cm{k}=\left(\lambda_k^{(\alpha,\beta)}\right)^{-(\sigma-\sigma_1)}
\left( \lambda_k^{(\alpha_1,\alpha_1)}/\lambda_k^{(\alpha,\beta)}\right)^{\sigma_1}.
   $$
 Hence, $s=\sigma-\sigma_1=\frac1{q_1}-\frac1q$, $\overline{a}=a,$
   \begin{equation}\label{thHI_3b}
   \frac{\overline{b}}{q}=\frac{\alpha_1}{q_1}+\frac{\beta-\alpha_1}{2}+\frac{1}{2}\left(\frac{1}{q_1}-\frac{1}{q}\right)=
   \frac{b}{q}-\frac{q(\alpha-\beta)-2(a-b)}{2q},
   \end{equation}
   and
   $$
   A=B=\frac{\alpha_1+1}{q_1}-\alpha_1=
\alpha_1\left( \frac1{q_1}-1\right)+\frac1{q_1} \le
  -\frac12\left( \frac1{q_1}-1\right)+\frac1{q_1} < 1, \quad [A]=[B]=0.
   $$
    We have
    \begin{equation*}
     \left\|  c_0+ \sum_{k=1}^\infty\left(\lambda_k^{(\alpha,\beta)}\right)^{-\sigma}
               c_k \psi^{(\alpha,\beta)}_k
        \right\|_{\Lw{q}{a,\overline{b}}}\le
   C\left\|c_0+\sum_{k=1}^\infty \left(\lambda_k^{(\alpha_1,\alpha_1)}\right)^{-\sigma_1}
   c_k \psi^{(\alpha_1,\alpha_1)}_k \right\|_{\Lw{q_1}{\alpha_1,\alpha_1}}.
   \end{equation*}
    Taking into account  \eqref{thHI_02} and \eqref{thHI_3b},
    we see that $\overline{b}\le b,$ and hence,
        \begin{equation}\label{thHI_2_4}
       \left\|  c_0+ \sum_{k=1}^\infty\left(\lambda_k^{(\alpha,\beta)}\right)^{-\sigma}
               c_k \psi^{(\alpha,\beta)}_k
        \right\|_{\Lw{q}{a,b}}\le 2^{b-\overline{b}}
     \left\|  c_0+ \sum_{k=1}^\infty\left(\lambda_k^{(\alpha,\beta)}\right)^{-\sigma}
               c_k \psi^{(\alpha,\beta)}_k
        \right\|_{\Lw{q}{a,\overline{b}}}.
   \end{equation}
 Finally, combining \eqref{thHI_2_2}, \eqref{thHI_2_3}, and
\eqref{thHI_2_4},  we obtain inequality \eqref{thHI_05}.

\end{proof}

%===================================================

\medskip
 \section{Ulyanov-type inequalities for $K$-functionals}

 Definitions and facts, given in this section and in the next one, are based on the  books \cite{book, Ma-Mi}; see also
 \cite{Da-Di-2005, Di-1998} and the recent survey   \cite{Di-2007}.

 In this section, we assume that $1 \le p \le \infty$, $a,b >-1$, $\alpha,\beta > -1$ and
 \begin{equation}\label{abab}
 \frac{a+1}{p} - \alpha <1 , \quad  \frac{b+1}{p} - \beta <1.
 \end{equation}
 Then,
 since  $\LL{p}{a,b} \subset \LL{1}{\alpha,\beta}$,
 the Fourier--Jacobi expansion \eqref{falphabeta1} is well-defined for any
 ${f\in \LL{p}{a,b}}$.

  Denote by $\PP_{n}$  the set of all algebraic polynomials of
 degree at most $n$, $\PP=\cup_{n\ge 0}\PP_n$.
  Let
     $P_{n,f}=P_n(f)_{\Lw{p}{a,b}},$ $P_{n,f}\in \PP_n$, be a near best polynomial approximant of a function  $f\in \LL{p}{a,b}$, that is,
 \begin{equation}\label{Pnf}
   \|f-P_{n,f}\|_{\Lw{p}{a,b}}\le C E_n(f)_{\Lw{p}{a,b}}, \quad   E_n(f)_{\Lw{p}{a,b}}=\inf\left\{\|f-P\|_{\Lw{p}{a,b}}:\  P\in \PP_n\right\}.
  \end{equation}

The $K$-functional corresponding to the differential operator~$\Opp^{(\alpha,\beta)} $ and a real positive number $r$
 is defined by
  \begin{align}\label{K-func1}
   &K^r(f, \Opp^{(\alpha,\beta)}_r,t)_{\Lw{p}{a,b}}=
  \inf \Big\{\|f-g\|_{\Lw{p}{a,b}}+t^{r}\|\Opp^{(\alpha,\beta)}_{r} g\|_{\Lw{p}{a,b}}: \  g\in W^{r, (\alpha,\beta)}_{p, (a,b)}  \Big\}
  \end{align}
(see  \cite[(1.9)]{Di-1998}),
 where
 $W^{r, (\alpha,\beta)}_{p, (a,b)}
 =\Big\{g: g,\,\Opp^{(\alpha,\beta)}_{r} g\in \LL{p}{a,b}\Big\}.$
 The following  realization result holds:
  \begin{equation}\label{K-func2}
   K^r\left(f,\Opp^{(\alpha,\beta)}_r , 1/n\right)_{\Lw{p}{a,b}}
  \asymp
 \|f-P_{n,f}\|_{\Lw{p}{a,b}}+
   n^{-r}\|\Opp^{(\alpha,\beta)}_r  P_{n,f}\|_{\Lw{p}{a,b}},
  \ 1<p<\infty.
  \end{equation}
 It is a corollary of Theorem 6.2 in \cite{Di-1998}.
    To apply this theorem, we have to show that the Ces\`{a}ro  operator $C^\ell_n$ given by
    $$
     C^\ell_n(f)=
    \sum_{k=0}^n
    \left(1-\tfrac{k}{n+1}\right)\left(1-\tfrac{k}{n+2}\right)\cdots \left(1-\tfrac{k}{n+\ell}\right) \widehat{f}_k \psi^{(\alpha,\beta)}_k
   $$
   is bounded in  $\LL{p}{a,b}$	for some $\ell$. This fact is mentioned in  \cite[Sec. 3]{Da-Di-2005}.
   Moreover, from \cite[Theorem~1.10, p.~4]{Mu-1987} (see also \cite[Theorem M]{Da-Di-2005})
   it easily follows that
    the operator $C^\ell_n$ is bounded in $\LL{p}{a,b}$ for any
   \begin{align*}
    \ell >\max &\left\{  \left|\tfrac{2(a+1)}{p}-\alpha-1 \right|,
                             \left|\tfrac{2(b+1)}{p}-\beta-1 \right|, \right.\\
                             &\left.\left|\tfrac{2(a+1)}{p}-\alpha-\tfrac12-\tfrac1p \right|,
                             \left|\tfrac{2(b+1)}{p}-\beta-\tfrac12-\tfrac1p \right|,
                             \left|\tfrac{2}{p}(a-b)-(\alpha-\beta) \right|
                   \right\}.
 \end{align*}
 Note that one can equivalently consider the boundedness of the Riesz means, see \cite[Theorem~3.19]{lecture}.

Now we formulate and prove the main result -- Ulyanov type inequality for $K$-functionals with Jacobi weights.
 Theorem~\ref{thU1} contains Theorem~\ref{thU1short}, stated in Introduction, as a particular case.

 \begin{theorem} \label{thU1}
{\it
  Let   $1< p<q < \infty$ and $r>0$. Suppose that $\alpha,\beta >-1,$ $a \ge b>-1$, $a\ge-1/2,$
 inequalities  \eqref{abab} hold$,$ and either $(\alpha,\beta)=(a,b),$ or
 $$
    p(\alpha-\beta)\le 2(a-b)\le q(\alpha-\beta),
$$
and  $\alpha=a,$ or $\alpha > a,$ $q > 2,$
or $\alpha < a,$ $p < 2.$

  Suppose also that
 $$\sigma=(2a+2)\left(\frac1p-\frac1q\right).$$
If~$f\in \LL{p}{a,b}$ and
 $$
 \int_0^1 \left(u^{-\sigma} K^{r+\sigma}(f, \Opp^{(\alpha,\beta)}_{r+\sigma},u)_{\Lw{p}{a,b}} \right)^{q} \frac{du}{u}<\infty,
 $$
 then $f\in \LL{q}{a,b}$
and
  \begin{equation}\label{thU1_01}
  K^r(f, \Opp^{(\alpha,\beta)}_r,t)_{\Lw{q}{a,b}}\le C
     \left(
    \int_0^t \left(u^{-\sigma} K^{r+\sigma}(f, \Opp^{(\alpha,\beta)}_{r+\sigma},u)_{\Lw{p}{a,b}} \right)^{q}
    \frac{du}{u}
    \right)^{1/q}.
   \end{equation}
  }
\end{theorem}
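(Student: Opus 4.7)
The plan is to combine the realization formula~\eqref{K-func2}, the Hardy--Littlewood inequality of Theorem~\ref{thHI}, and a dyadic decomposition of $f$ into polynomial blocks. Fix $t\in(0,1]$ and choose $N=2^{k_0}$ with $1/N\asymp t$. Denote by $P_{n,f}=P_n(f)_{\Lw{p}{a,b}}$ a near-best polynomial approximant of degree at most~$n$ in~$\LL{p}{a,b}$, as in~\eqref{Pnf}, and set $Q_k:=P_{2^{k+1},f}-P_{2^k,f}$ for $k\ge 0$. Under hypothesis~\eqref{abab} we have $A=(a+1)/p-\alpha<1$ and $B=(b+1)/p-\beta<1$, so $\max\{0,[A]\}=\max\{0,[B]\}=0$ and the vanishing condition~\eqref{thHI_04} in Theorem~\ref{thHI} is vacuous; since $\lambda_0^{(\alpha,\beta)}=0$, the derivatives $\Opp_\sigma^{(\alpha,\beta)}P$ and $\Opp_{r+\sigma}^{(\alpha,\beta)}P$ of any polynomial~$P$ have zero constant Fourier--Jacobi coefficient and Theorem~\ref{thHI} applies directly to them.

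The main transfer estimate is the identity $\Opp_r^{(\alpha,\beta)}P=\Oi_\sigma^{(\alpha,\beta)}\bigl(\Opp_{r+\sigma}^{(\alpha,\beta)}P\bigr)$, valid for any polynomial~$P$ because $\Opp_r^{(\alpha,\beta)}$ kills constants. Invoking Theorem~\ref{thHI} yields
$$
\|\Opp_r^{(\alpha,\beta)}P\|_{\Lw{q}{a,b}}\le C\,\|\Opp_{r+\sigma}^{(\alpha,\beta)}P\|_{\Lw{p}{a,b}},
$$
and analogously $\|Q_k-\widehat{Q}_{k,0}^{(\alpha,\beta)}\psi_0^{(\alpha,\beta)}\|_{\Lw{q}{a,b}}\le C\,\|\Opp_\sigma^{(\alpha,\beta)}Q_k\|_{\Lw{p}{a,b}}$, while the constant term is harmless under the hypothesis that $f\in\LL{p}{a,b}$. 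Combining with the realization~\eqref{K-func2} and a Bernstein-type inequality on polynomials gives $\|\Opp_{r+\sigma}^{(\alpha,\beta)}P_{n,f}\|_{\Lw{p}{a,b}}\le C\,n^{r+\sigma}K^{r+\sigma}(f,1/n)_{\Lw{p}{a,b}}$ and, by telescoping, $\|\Opp_\sigma^{(\alpha,\beta)}Q_k\|_{\Lw{p}{a,b}}\le C\,2^{k\sigma}K^{r+\sigma}(f,2^{-k})_{\Lw{p}{a,b}}=:\omega_k$.

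For each test polynomial $g=P_{2^m,f}$ with $m\ge k_0$ in the infimum defining $K^r(f,2^{-k_0})_{\Lw{q}{a,b}}$, the bounds above give
$$
K^r(f,2^{-k_0})_{\Lw{q}{a,b}}\le\|f-P_{2^m,f}\|_{\Lw{q}{a,b}}+2^{-k_0 r}\|\Opp_r^{(\alpha,\beta)}P_{2^m,f}\|_{\Lw{q}{a,b}}\le C\!\sum_{k\ge m}\omega_k+C\,2^{(m-k_0)r}\omega_m.
$$
Setting $m=k_0$ controls the second summand by $C\omega_{k_0}$, which is dominated by $\bigl(\int_{2^{-k_0-1}}^{2^{-k_0}}(u^{-\sigma}K^{r+\sigma}(f,u)_{\Lw{p}{a,b}})^q\,du/u\bigr)^{1/q}$ by the quasi-monotonicity of $u\mapsto K^{r+\sigma}(f,u)_{\Lw{p}{a,b}}/u^{r+\sigma}$. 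The first summand is the crux: the naive triangle bound $\|f-P_{2^m,f}\|_{\Lw{q}{a,b}}\le C\sum_{k\ge m}\omega_k$ yields only the classical (non-sharp) Ulyanov inequality, an $\ell^1$-type aggregation. To upgrade this to the sharp $\ell^q$-type aggregation $\bigl(\sum_{k\ge k_0}\omega_k^q\bigr)^{1/q}$, one takes the infimum of the displayed estimate over all $m\ge k_0$ and applies a discrete Hardy-type inequality to the resulting family of bounds, paralleling the scheme employed in the trigonometric setting. This sharp $\ell^q$ aggregation is the main technical obstacle of the proof. Once it is in place, $\{P_{2^k,f}\}$ is Cauchy in~$\LL{q}{a,b}$ (so $f\in\LL{q}{a,b}$), and the passage from the dyadic sum to the integral in~\eqref{thU1_01} follows from standard equivalences between monotone $K$-functionals and their integral means.
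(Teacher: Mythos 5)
Your overall architecture coincides with the paper's: reduce to $t\asymp 1/n$, use the realization \eqref{K-func2} with the near best approximants \eqref{Pnf}, and handle the derivative term via the identity $\Opp^{(\alpha,\beta)}_r P=\Oi^{(\alpha,\beta)}_\sigma\bigl(\Opp^{(\alpha,\beta)}_{r+\sigma}P\bigr)$ together with Theorem~\ref{thHI} (or Theorem~\ref{ThBa} when $(\alpha,\beta)=(a,b)$); that part of your argument is sound and matches the paper. The gap is in the term $\|f-P_{n,f}\|_{\Lw{q}{a,b}}$, which you yourself flag as ``the main technical obstacle'' but do not resolve. The mechanism you propose --- take the infimum over $m\ge k_0$ of the bound $C\sum_{k\ge m}\omega_k+C\,2^{(m-k_0)r}\omega_m$ and then apply a discrete Hardy-type inequality --- provably cannot produce the $\ell^q$ aggregation. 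Indeed, take $\omega_k\asymp 1$ for $k_0\le k\le K$ and $\omega_k\asymp 2^{-(k-K)r}$ for $k>K$ (an admissible profile for $2^{k\sigma}K^{r+\sigma}(f,\Opp^{(\alpha,\beta)}_{r+\sigma},2^{-k})_{\Lw{p}{a,b}}$, since $K^{r+\sigma}(f,u)$ must be nondecreasing and $K^{r+\sigma}(f,u)/u^{r+\sigma}$ nonincreasing). For $m>K$ the second summand is at least of order $2^{(K-k_0)r}$, while for $m\le K$ the first summand is at least of order $K-m$ and the second at least $2^{(m-k_0)r}$; hence the infimum over all $m$ is of order $K-k_0$, whereas the right-hand side of \eqref{thU1_01} is of order $(K-k_0)^{1/q}$. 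No postprocessing of your family of bounds can close this: the $\ell^1$ information $\sum_{k\ge m}\omega_k$ coming from the block-by-block triangle inequality is genuinely weaker than the $\ell^q$ statement you need.

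The paper closes this step with a different tool: the sharp Ulyanov inequality for best approximants,
$\|f-P_{n,f}\|_{\Lw{q}{a,b}}\le C\bigl(\sum_{k=n}^\infty k^{q\sigma-1}\|f-P_{k,f}\|^q_{\Lw{p}{a,b}}\bigr)^{1/q}$,
quoted from \cite[Theorem 4.1, (4.6)']{Di-Ti-2005}. Its hypothesis is the weighted Nikol'skii inequality $\|P_n\|_{\Lw{q}{a,b}}\le Cn^{(2a+2)(1/p-1/q)}\|P_n\|_{\Lw{p}{a,b}}$ of \cite{Da-Ra-1972}, and its proof proceeds by a distribution-function argument rather than by summing dyadic blocks; this is precisely where the $\ell^q$ (rather than $\ell^1$) aggregation is obtained. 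If you import that inequality, then use \eqref{K-func2} to replace $\|f-P_{k,f}\|_{\Lw{p}{a,b}}$ by $K^{r+\sigma}(f,\Opp^{(\alpha,\beta)}_{r+\sigma},1/k)_{\Lw{p}{a,b}}$ and pass from the sum to the integral, your proof becomes the paper's. As written, however, the central step is missing, and the route you sketch for it cannot work.
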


 Theorem~\ref{thU1} extends the  results of~\cite[Theorem 11.2]{Di-Ti-2005} and
\cite[Section 3.3.1]{trebels-jat}
 in two  directions.
 First, our estimate involves the  $K$-functional of order $r+\sigma$, i.e., we get the sharp estimate. Second, we consider
 the case when
 $(\alpha,\beta)\neq (a,b)$.
We also remark that the sharp Ulyanov inequality for functions on $\mathbb{S}^{d-1}$ was recently proved in
\cite{Wa-2011}.

  \begin{proof}
 Using monotonicity properties of the $K$-functional, it is enough  to verify inequality~\eqref{thU1_01} for
  $t=1/n,$ $n\in \mathbb{N}$.
We have %  Let $P_{n,f}$ be defined by~\eqref{Pnf}.   Hence, we have the inequality
    \begin{equation}\label{est0}
  K^{r}(f,\Opp^{(\alpha,\beta)}_r, 1/n)_{\Lw{q}{a,b}} \le C \left(\|f-P_{n,f}\|_{\Lw{q}{a,b}}+
  n^{-r}\|\Opp^{(\alpha,\beta)}_{r} P_{n,f}\|_{\Lw{q}{a,b}}\right),
  \end{equation}
  where $P_{n,f}$ is given by~\eqref{Pnf}.
  To estimate the first term, we apply~\cite[Theorem 4.1, (4.6)']{Di-Ti-2005} to  get
  \begin{equation*}
  \|f-P_{n,f}\|_{\Lw{q}{a,b}}\le C  \left(\sum_{k=n}^\infty k^{q\sigma-1}
 \|f-P_{k,f}\|_{\Lw{p}{a,b}}^{q} \right)^{1/q}.
 \end{equation*}
  In view of the realization result~\eqref{K-func2},  we obtain
  \begin{equation*}
  \begin{aligned}
  \|f-P_{n,f}\|_{\Lw{q}{a,b}}
  &\le C  \left(\sum_{k=n}^\infty k^{q\sigma-1} \|f-P_{k,f}\|_{\Lw{p}{a,b}}^{q} \right)^{1/q}
  \\
  &\le C\left(\sum_{k=n}^\infty k^{q\sigma-1}
 K^{r+\sigma}(f,\Opp^{(\alpha,\beta)}_{r+\sigma}, 1/k)_{\Lw{p}{a,b}}^{q} \right)^{1/q}
  \\
  &\le C
     \left(
    \int_0^t \left(u^{-\sigma} K^{r+\sigma}(f, \Opp^{(\alpha,\beta)}_{r+\sigma},u)_{\Lw{p}{a,b}} \right)^{q}
    \frac{du}{u}
    \right)^{1/q}.
  \end{aligned}
 \end{equation*}
 To estimate the second term in~\eqref{est0},
 we use  Theorem~D or Theorem~\ref{thHI} depending on whether $(\alpha,\beta)=(a,b)$
 or $(\alpha,\beta)\neq(a,b)$:
  \begin{align*}\label{f13}
    n^{-r}\left\|\Opp^{(\alpha,\beta)}_r P_{n,f}\right\|_{\Lw{q}{a,b}}\le
    C n^\sigma n^{-(r+\sigma)}\left\| \Opp^{(\alpha,\beta)}_{r+\sigma} P_{n,f} \right\|_{\Lw{p}{a,b}} \le
   C  n^\sigma   K^{r+\sigma}(f, \Opp^{(\alpha,\beta)}_{r+\sigma}, 1/n)_{\Lw{p}{a,b}}.
  \end{align*}
 To complete the proof of \eqref{thU1_01},
  we have 
  $$
  n^\sigma   K^{r+\sigma}(f, \Opp^{(\alpha,\beta)}_{r+\sigma}, 1/n)_{\Lw{p}{a,b}}\le C
 \left(
    \int_{1/2n}^{1/n} \left(u^{-\sigma} K^{r+\sigma}(f, \Opp^{(\alpha,\beta)}_{r+\sigma},u)_{\Lw{p}{a,b}} \right)^{q}
    \frac{du}{u}
    \right)^{1/q}.
   $$
   \end{proof}

 \section{Ulyanov-type inequalities for Ditzian--Totik moduli of smoothness}

The (global) weighted modulus of smoothness of order
{$r\ge 1$}
 is given by
  \begin{align*}
  \omega^r_\varphi(f,t)_{\Lw{p}{a,b}}=
 \Omega^r_\varphi(f,t)_{\Lw{p}{a,b}}&+
  \inf_{P\in\PP_{r-1}}\big\|(f-P)w\big\|_{\Ls{p}{[-1,\,-1+4k^2t^2]}}
 \\&+
  \inf_{P\in\PP_{r-1}}\|(f-P)w\|_{\Ls{p}{[1-4k^2t^2,\,1]}},
  \end{align*}
  where $w=\left(\w{a}{b}\right)^{1/p},$
  $$
   \Omega^r_\varphi(f,t)_{\Lw{p}{a,b}}=
  \sup_{0<h\le t}\|\Delta^r_{h\varphi}fw\|_{\Ls{p}{[-1+4k^2t^2,\,1-4k^2t^2]}}
  $$
  and
  $$
  \Delta^r_{h\varphi}f(x)=\sum_{i=0}^r(-1)^i \binom{r}{i}
  f\left(x+\tfrac{r-2i}{2}h\varphi(x)\right).
  $$
Note that  (see \cite[(2.5.7)]{Ma-Mi}) this definition
is equivalent to the one given in \cite[Chapter 6, Appendix B]{book}.

  Let  $K^r_{\varphi}(f,t)_{\Lw{p}{a,b}}$, ${r\in \mathbb{N}}$,
  be the   $K$-functional
for the pair of spaces $\left(\LL{p}{a,b}, W^r_{p, (a,b)}\right)$, where
$W^r_{p, (a,b)}$ consists of functions $g\in \LL{p}{a,b}$ such that
  $g^{(r-1)}\in \mathrm{AC}_{\mathrm{loc}}$ and
$\varphi^r g^{(r)} \in \LL{p}{a,b}$ (see \cite[(6.1.1)]{book}):
   \begin{equation}\label{K-varphi-func}
   K^r_{\varphi}(f,t)_{\Lw{p}{a,b}}=
  \inf \Big\{\|f-g\| _{\Lw{p}{a,b}}+t^r \|\varphi^r g^{(r)}\|_{\Lw{p}{a,b}}
   : \ g\in W^r_{p, (a,b)}\Big\}.
   \end{equation}
   It is  known that
   $ K^r_{\varphi}(f,t)_{p,(a,b)}\asymp  \omega^r_\varphi(f,t)_{\Lw{p}{a,b}}$
      for $a, b\ge 0$; see  %\footnote{S: }
     \cite[Theorem 6.1.1]{book}. %\textcolor{red}{u Mastroianni $a,b>-1$}
  Moreover, we have the following realization result:
  \begin{equation}\label{realization_classic}
  \omega^r_\varphi(f,t)_{\Lw{p}{a,b}}\asymp
  \|f-P_{n,f}\| _{\Lw{p}{a,b}}+
   t^r\|\varphi^r P_{n,f}^{(r)}\|_{\Lw{p}{a,b}}, \qquad [1/t]=n.
  \end{equation}
 The proof of this equivalence  (cf. \cite{Di-Hr-Iv-1995}) is based on the
   Jackson-type inequality and the estimate of $t^r\|\varphi^r \psi^{(r)}\|_{\Lw{p}{a,b}}$ via
$ \omega^r_\varphi(f,t)_{\Lw{p}{a,b}}$ (the Nikolskii--Stechkin type inequality).
  The Jackson-type inequality was obtained in \cite[Theorem 7.2.1]{book} for the unweighted case and in
   \cite[Sec. 2.5.2, (2.5.17)]{Ma-Mi} for the weighted case.
The unweighted version of the Nikolskii--Stechkin type inequality   % $t^r\|\varphi^r \psi^{(r)}\|_{\Lw{p}{a,b}}\le$
 was proved in  \cite[Theorem 7.3.1]{book}.
 This argument can be used to show the  weighted version.

  The relation between $K$-functionals \eqref{K-varphi-func} and \eqref{K-func1}
   in the case when $r$ is positive integer follows from Corollary \ref{co1} below.
Note that the case $(\alpha,\beta)=(a,b)$ is due to Dai and Ditzian \cite[Theorem~7.1]{Da-Di-2005}
and is based on the Muckenhoupt transplantation theorem.
We follow the idea of their proof and first obtain the following result.

  \begin{theorem}\label{th3}
 {\it
Let $1<p<\infty$, $r$ be a positive integer,
 and $a,\,b,\,\alpha,\,\beta >-1$ be such that $\eqref{abab}$ holds.
Then there exists a constant $C$ such that for any
  $Q\in \PP,$
  we have
    \begin{equation}\label{th3_01}
    \left\|\varphi^r Q^{(r)}\right\|_{\Lw{p}{a,b}}
    \le C \left\|\Opp^{(\alpha,\beta)}_r Q  \right\|_{\Lw{p}{a,b}},
   \end{equation}
  \begin{equation}\label{th3_02}
   \left\|\Opp^{(\alpha,\beta)}_r \left( Q-   S^{(\alpha,\beta)}_{r-1}Q\right) \right\|_{\Lw{p}{a,b}} \le
   C\left\|\varphi^r Q^{(r)}\right\|_{\Lw{p}{a,b}},
   \end{equation}
where $   S^{(\alpha,\beta)}_{r-1}Q$ is the $(r-1)$-th partial sum of the Fourier--Jacobi expansion of $Q$, i.e.,
 $$
   S^{(\alpha,\beta)}_{r-1}Q=\sum_{k=0}^{r-1}   \widehat{Q}^{(\alpha,\beta)}_k  \psi_k^{(\alpha,\beta)}.
 $$
 }

\end{theorem}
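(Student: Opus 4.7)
The plan is to prove both inequalities by applying the Muckenhoupt transplantation theorem (Theorem~E) in the two opposite directions, exploiting the fact that for integer $r$ the differentiation formula for Jacobi polynomials gives
$$\left(\psi_k^{(\alpha,\beta)}\right)^{(r)}=d_{k,r}\,\psi_{k-r}^{(\alpha+r,\beta+r)},\qquad k\ge r,$$
where the constants $d_{k,r}$ are explicit ratios of Gamma functions and orthonormalization factors, satisfying $d_{k,r}\asymp k^r$ with an asymptotic expansion in inverse powers of $k+1$ of the type required by Theorem~E. Writing $Q=\sum_k c_k\psi_k^{(\alpha,\beta)}$ with $c_k=\widehat{Q}^{(\alpha,\beta)}_k$, this gives $Q^{(r)}=\sum_{k\ge r} c_k d_{k,r}\psi_{k-r}^{(\alpha+r,\beta+r)}$ and $\Opp^{(\alpha,\beta)}_rQ=\sum_{k\ge 1}c_k(\lambda_k^{(\alpha,\beta)})^r\psi_k^{(\alpha,\beta)}$. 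Moreover, since $\varphi^{rp}(x)\w{a}{b}(x)=\w{a+rp/2}{b+rp/2}(x)$, one has $\|\varphi^r Q^{(r)}\|_{\Lw{p}{a,b}}=\|Q^{(r)}\|_{\Lw{p}{a+rp/2,\,b+rp/2}}$, which converts the weighted norm with weight $\varphi^r$ into a pure Jacobi-weighted norm with shifted parameters.

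To prove \eqref{th3_01}, I would apply Theorem~E with $(\overline{p},\overline{q})=(p,p)$, $(\overline{\gamma},\overline{\delta})=(\alpha,\beta)$, $(\overline{\alpha},\overline{\beta})=(\alpha+r,\beta+r)$, $(\overline{c},\overline{d})=(a,b)$, $(\overline{a},\overline{b})=(a+rp/2,b+rp/2)$, $h=-r$, $s=0$, and $\nu_k=d_{k,r}/(\lambda_k^{(\alpha,\beta)})^r$. A direct computation verifies the balance identities $\overline{a}/\overline{q}=\overline{c}/\overline{p}+(\overline{\alpha}-\overline{\gamma})/2$ and its counterpart with $b,d,\beta,\delta$; hypothesis \eqref{abab} gives $A=(a+1)/p-\alpha<1$ and $B=(b+1)/p-\beta<1$, so $A,B$ are not positive integers and $M=N=0$, making the vanishing-coefficient requirement vacuous. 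Applying the theorem to $f=\Opp^{(\alpha,\beta)}_r Q$ and letting $\rho\to 1^-$ produces $T_\rho f\to Q^{(r)}$ in $\LL{p}{a+rp/2,\,b+rp/2}$, yielding \eqref{th3_01}.

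For \eqref{th3_02} I would apply Theorem~E in the reverse direction with $(\overline{\gamma},\overline{\delta})=(\alpha+r,\beta+r)$, $(\overline{\alpha},\overline{\beta})=(\alpha,\beta)$, $(\overline{c},\overline{d})=(a+rp/2,b+rp/2)$, $(\overline{a},\overline{b})=(a,b)$, $h=+r$, $s=0$, and $\nu_k=(\lambda_{k+r}^{(\alpha,\beta)})^r/d_{k+r,r}$. The balance identities are checked just as before, and here $A=(a+1)/p-\alpha-r/2<1$ by \eqref{abab}, similarly $B<1$, so again $A,B$ are not positive integers and $M=N=0$. Taking $f=Q^{(r)}\in\LL{p}{a+rp/2,\,b+rp/2}$, the transplanted series $T_\rho f$ converges as $\rho\to 1^-$ to $\sum_{k\ge r}c_k(\lambda_k^{(\alpha,\beta)})^r\psi_k^{(\alpha,\beta)}=\Opp^{(\alpha,\beta)}_r(Q-S^{(\alpha,\beta)}_{r-1}Q)$ in $\LL{p}{a,b}$, which gives \eqref{th3_02}.

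The main technical obstacle is the verification that $\nu_k$ admits the asymptotic expansion $\nu_k=\sum_{j=0}^{J-1}c_j(k+1)^{-j}+O((k+1)^{-J})$ for arbitrary $J$ required by Theorem~E. This reduces to Stirling-type expansions for the Gamma quotient $\Gamma(k+\alpha+\beta+1+r)/\Gamma(k+\alpha+\beta+1)$ coming from the iterated Jacobi differentiation formula, together with the asymptotics of the squared $L^2$-norms $\|P_k^{(\alpha,\beta)}\|_{L^2_{(\alpha,\beta)}}^2$; both are classical. The non-trivial part is merely bookkeeping to ensure that the balance identities and the bounds on $A,B$ are exactly those guaranteed by \eqref{abab}.
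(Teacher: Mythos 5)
Your proposal is correct and follows essentially the same route as the paper: both inequalities are obtained by applying the Muckenhoupt transplantation theorem (Theorem~\ref{ThMu}) with $s=0$ and $h=\mp r$, using the Jacobi differentiation formula to shift between the systems $\psi_k^{(\alpha,\beta)}$ and $\psi_k^{(\alpha+r,\beta+r)}$, the identity $\|\varphi^r Q^{(r)}\|_{\Lw{p}{a,b}}=\|Q^{(r)}\|_{\Lw{p}{a+pr/2,b+pr/2}}$, and the bounds $A,B<1$ coming from \eqref{abab}; your multiplier $d_{k,r}/(\lambda_k^{(\alpha,\beta)})^r$ is exactly the paper's $\lambda_k(\alpha,\beta,r)/(\lambda_k^{(\alpha,\beta)})^r$. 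The parameter choices, the role of $S^{(\alpha,\beta)}_{r-1}Q$ in the second inequality, and the verification of the hypotheses all match the paper's argument.
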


\begin{proof}
  The proof of \eqref{th3_01} and \eqref{th3_02} is based on Theorem~\ref{ThMu}.
Since ${\widehat{Q}}_k^{(\alpha,\beta)}=0$ starting from certain $k$,
   we obtain
 $$
  \Opp^{(\alpha,\beta)}_r Q= \sum_{k=1}^\infty \left(\lambda_k^{(\alpha,\beta)}\right)^{r}
 {\widehat{Q}}_k^{(\alpha,\beta)} \psi_k^{(\alpha,\beta)}=
   \sum_{k=1-r}^\infty \left(\lambda_{k+r}^{(\alpha,\beta)}\right)^{r}
 {\widehat{Q}}_{k+r}^{(\alpha,\beta)} \psi_{k+r}^{(\alpha,\beta)},
  $$
 $$
 Q^{(r)}= \sum_{k=r}^\infty \lambda_k{\widehat{Q}}_k^{(\alpha,\beta)}
 \psi_{k-r}^{(\alpha+r,\beta+r)}=
  \sum_{k=0}^\infty \lambda_{k+r}{\widehat{Q}}_{k+r}^{(\alpha,\beta)}
 \psi_{k}^{(\alpha+r,\beta+r)},
$$
where
$$
 \lambda_k=\lambda_k(\alpha,\beta,r)=
  \lambda_k^{(\alpha,\beta)}\cdots \lambda_{k-r+1}^{(\alpha+r-1,\beta+r-1)}.
  $$
 To prove  inequality~\eqref{th3_01},
 %  $$
 %  \frac{d}{du}\psi_k^{(\alpha,\beta)}(u)=
 % \lambda^{(\alpha,\beta)}_k P_{k-1}^{(\alpha+1,\beta+1)}(u) .
 % $$
  we apply Theorem~\ref{ThMu} with $(\overline{p},\overline{q})=(p,p)$,
   $(\overline{\alpha},\overline{\beta})=(\alpha+r,\beta+r)$,
  $(\overline{\gamma},\overline{\delta})=(\alpha,\beta)$,
   $(\overline{c},\overline{d})=(a,b)$,
  $h=-r$,
and   $$
    \cm{k}= \lambda_k/ \left( \lambda_k^{(\alpha,\beta)}\right)^{r}.
   $$
Then  $s=0$,  $\left(\overline{a},\overline{b}\right)=(a+pr/2, b+pr/2)$,
 $ A=(a+1)/p-\alpha,$ and $B=(b+1)/p-\beta$.
   On account of \eqref{abab}, we conclude that $A<1$, $B<1$, and  therefore,
   all conditions of Theorem~\ref{ThMu} are satisfied. Hence, we get
 $$
  \left\|\varphi^r Q^{(r)}\right\|_{\Lw{p}{a,b}} =
  \left\| Q^{(r)}\right\|_{\Lw{p}{a+pr/2,b+pr/2}}
  \le C \left\|\Opp^{(\alpha,\beta)}_r Q  \right\|_{\Lw{p}{a,b}}.
 $$

Let us now obtain~\eqref{th3_02}. We remark that
  $g=\Opp^{(\alpha,\beta)}_r \left(Q -S^{(\alpha,\beta)}_{r-1}Q\right)$
  is a polynomial and its Fourier--Jacobi  coefficients  satisfy
   $\widehat{g}^{(\alpha,\beta)}_k=0$ for $0\le k\le r-1$.
   We apply Theorem~\ref{ThMu} with $(\overline{p},\overline{q})=(p,p)$,
   $(\overline{\alpha},\overline{\beta})=(\alpha,\beta)$,
  $(\overline{\gamma},\overline{\delta})=(\alpha+r,\beta+r)$,
   $(\overline{c},\overline{d})=(a+pr/2,b+pr/2)$,
  $h=r$, and
   $$
    \cm{k}= \left( \lambda_k^{(\alpha,\beta)}\right)^{r}/\lambda_k.
   $$
   Then
    $s=0$,  $\left(\overline{a},\overline{b}\right)=(a, b)$,
 $ A=(a+1)/p-\alpha-r/2<1,$ and $B=(b+1)/p-\beta-r/2<1.$
 Therefore, all conditions of Theorem~\ref{ThMu}
are satisfied, and we arrive at
 $$
 \left\|\Opp^{(\alpha,\beta)}_r  \left(Q -S^{(\alpha,\beta)}_{r-1}Q \right)  \right\|_{\Lw{p}{a,b}}
 \le C
 \left\| Q^{(r)}\right\|_{\Lw{p}{a+pr/2,b+pr/2}}=
  C\left\|\varphi^r Q^{(r)}\right\|_{\Lw{p}{a,b}} .
 $$
 \end{proof}

  \begin{corollary}\label{co1}
 {\it
Under assumptions of Theorem $\ref{th3},$
there exists a constant $C$ such that for any
  $f\in \LL{p}{a,b}$ and $t\in(0,t_0)$ we have
 \begin{equation}\label{co1_02}
   K^r_{\varphi}(f,t)_{\Lw{p}{a,b}} \le C K^r(f, \Opp^{(\alpha,\beta)}_r,t)_{\Lw{p}{a,b}}
 \end{equation}
 and
 \begin{equation*}
 K^r(f, \Opp^{(\alpha,\beta)}_r,t)_{\Lw{p}{a,b}}\le
 C\left(   K^r_{\varphi}(f,t)_{\Lw{p}{a,b}}+t^r\|f\|_{\Lw{p}{a,b}}\right).
\end{equation*}}
\end{corollary}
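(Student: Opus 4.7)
The overall plan is to exploit the two realization equivalences, \eqref{K-func2} for $K^r(f,\Opp^{(\alpha,\beta)}_r,t)$ and \eqref{realization_classic} (together with $K^r_\varphi \asymp \omega^r_\varphi$) for $K^r_\varphi(f,t)$, to reduce both $K$-functionals to concrete expressions in the near best polynomial approximant $P_{n,f}$ with $n=[1/t]$; the two pointwise polynomial inequalities \eqref{th3_01}--\eqref{th3_02} then convert between the weighted derivative $\varphi^r P_{n,f}^{(r)}$ and the fractional derivative $\Opp^{(\alpha,\beta)}_r P_{n,f}$.

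For \eqref{co1_02}, I fix $t\in(0,t_0)$, set $n=[1/t]$ and test the infimum defining $K^r_\varphi(f,t)_{\Lw{p}{a,b}}$ with $g=P_{n,f}$, obtaining
$$ K^r_\varphi(f,t)_{\Lw{p}{a,b}} \le \|f-P_{n,f}\|_{\Lw{p}{a,b}} + t^r \bigl\|\varphi^r P_{n,f}^{(r)}\bigr\|_{\Lw{p}{a,b}}. $$
Applying \eqref{th3_01} with $Q=P_{n,f}$ replaces the second term by $C t^r\|\Opp^{(\alpha,\beta)}_r P_{n,f}\|_{\Lw{p}{a,b}}$. Since $t^r\asymp n^{-r}$, the right-hand side is comparable to the realization expression from \eqref{K-func2}, hence bounded by $C\,K^r(f,\Opp^{(\alpha,\beta)}_r,t)_{\Lw{p}{a,b}}$.

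For the reverse direction, I again take $n=[1/t]$ and test the infimum defining $K^r(f,\Opp^{(\alpha,\beta)}_r,t)_{\Lw{p}{a,b}}$ with $g=P_{n,f}$:
$$ K^r(f,\Opp^{(\alpha,\beta)}_r,t)_{\Lw{p}{a,b}} \le \|f-P_{n,f}\|_{\Lw{p}{a,b}} + t^r \bigl\|\Opp^{(\alpha,\beta)}_r P_{n,f}\bigr\|_{\Lw{p}{a,b}}. $$
The key step is to split
$$ \Opp^{(\alpha,\beta)}_r P_{n,f} = \Opp^{(\alpha,\beta)}_r\bigl(P_{n,f} - S^{(\alpha,\beta)}_{r-1}P_{n,f}\bigr) + \Opp^{(\alpha,\beta)}_r S^{(\alpha,\beta)}_{r-1}P_{n,f}. $$
By \eqref{th3_02}, the first summand has $\Lw{p}{a,b}$-norm at most $C\|\varphi^r P_{n,f}^{(r)}\|_{\Lw{p}{a,b}}$. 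The second summand lies in the fixed finite-dimensional space $\PP_{r-1}$, on which $\Opp^{(\alpha,\beta)}_r$ is a bounded operator; combined with the fact that the partial sum projection $S^{(\alpha,\beta)}_{r-1}$ is bounded from $\LL{p}{a,b}$ to $\PP_{r-1}\subset\LL{p}{a,b}$ (this uses \eqref{abab} through $\LL{p}{a,b}\subset\LL{1}{\alpha,\beta}$ to make the Fourier--Jacobi coefficients well-defined continuous linear functionals on $\LL{p}{a,b}$), this contributes at most $C\|P_{n,f}\|_{\Lw{p}{a,b}} \le C(\|f\|_{\Lw{p}{a,b}}+\|f-P_{n,f}\|_{\Lw{p}{a,b}})$. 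Absorbing $t^r\|f-P_{n,f}\|_{\Lw{p}{a,b}}$ into $\|f-P_{n,f}\|_{\Lw{p}{a,b}}$ (legitimate since $t<t_0$) and applying the $\omega^r_\varphi$-realization \eqref{realization_classic} together with $\omega^r_\varphi \asymp K^r_\varphi$ identifies the result as $C(K^r_\varphi(f,t)_{\Lw{p}{a,b}} + t^r\|f\|_{\Lw{p}{a,b}})$.

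The principal obstacle is controlling $\Opp^{(\alpha,\beta)}_r S^{(\alpha,\beta)}_{r-1}P_{n,f}$: Theorem~\ref{th3} only relates $\varphi^r Q^{(r)}$ to the fractional derivative of $Q$ \emph{after} removing the low-frequency projection, so the residual low-frequency polynomial is precisely what produces the extra additive term $t^r\|f\|_{\Lw{p}{a,b}}$ in the second inequality. This asymmetry between \eqref{co1_02} and its converse is intrinsic to the argument and cannot be removed without further assumptions.
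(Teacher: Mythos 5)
Your proposal is correct and follows essentially the same route as the paper: both directions are obtained by realizing the $K$-functionals through the near best approximant $P_{n,f}$ with $n=[1/t]$, converting $\varphi^r P_{n,f}^{(r)}$ and $\Opp^{(\alpha,\beta)}_r P_{n,f}$ into one another via \eqref{th3_01} and \eqref{th3_02}, and handling the low-frequency residue $\Opp^{(\alpha,\beta)}_r S^{(\alpha,\beta)}_{r-1}P_{n,f}$ by the boundedness of that finite-rank operator, which is exactly the paper's inequality \eqref{vvsspp}. Your extra remarks on why $S^{(\alpha,\beta)}_{r-1}$ is bounded on $\LL{p}{a,b}$ under \eqref{abab} only make explicit what the paper asserts without comment.
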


 \begin{proof}
First, (\ref{th3_01}) and the realization result (\ref{realization_classic})  yield that
 \begin{align*}
   K^r_{\varphi}(f,t)_{\Lw{p}{a,b}}& \le   \|f-P_{n,f}\|_{\Lw{p}{a,b}}+
 t^{r}\|\varphi^r P_{n,f}^{(r)}\|_{\Lw{p}{a,b}}
 \\
 &\le
  C\left(  \|f-P_{n,f}\|_{\Lw{p}{a,b}}+
 t^{r}\|\Opp^{(\alpha,\beta)}_r  P_{n,f}\|_{\Lw{p}{a,b}}    \right) \le C  K^r(f, \Opp^{(\alpha,\beta)}_r,t)_{\Lw{p}{a,b}},
 \end{align*}
 which is \eqref{co1_02}.

Second, under condition~\eqref{abab}, the operator
$A: \PP\to \PP_{r-1}$ given by
 $$A({Q})=\Opp^{(\alpha,\beta)}_r S^{(\alpha,\beta)}_{r-1}{Q}$$
is bounded in $\LL{p}{a,b},$ i.e.,
 \begin{equation}\label{vvsspp}
 \|\Opp^{(\alpha,\beta)}_r  S^{(\alpha,\beta)}_{r-1}{Q}\|_{\Lw{p}{a,b}}\le C(p,a,b,\alpha,\beta,r)
\|{Q}\|_{\Lw{p}{a,b}}.
 \end{equation}
Using this, we obtain
 \begin{align*}
 &K^r(f, \Opp^{(\alpha,\beta)}_r,t)_{\Lw{p}{a,b}}
\le
 \|f-P_{n,f}\|_{\Lw{p}{a,b}}+ t^{r}\|\Opp^{(\alpha,\beta)}_r  P_{n,f}\|_{\Lw{p}{a,b}}
\\
&\le
 \|f-P_{n,f}\|_{\Lw{p}{a,b}}+
 t^{r}\|\Opp^{(\alpha,\beta)}_r ( P_{n,f} -S^{(\alpha,\beta)}_{r-1}P_{n,f})\|_{\Lw{p}{a,b}}
+ t^{r}\|\Opp^{(\alpha,\beta)}_r S^{(\alpha,\beta)}_{r-1}P_{n,f}\|_{\Lw{p}{a,b}}.
\end{align*}
Finally,
 (\ref{th3_02}) and
(\ref{vvsspp}) imply
 \begin{eqnarray*}
 K^r(f, \Opp^{(\alpha,\beta)}_r,t)_{\Lw{p}{a,b}}
&\le& C\left( \|f-P_{n,f}\|_{\Lw{p}{a,b}}+ t^{-r}\|\varphi^r P_{n,f}^{(r)} \|_{\Lw{p}{a,b}} +
   t^{r}\| P_{n,f}\|_{\Lw{p}{a,b}}\right)
\\
&\le&
 C\,\big(   K^r_{\varphi}(f,t)_{\Lw{p}{a,b}}+t^r\|f\|_{\Lw{p}{a,b}}\big).
\end{eqnarray*}

  \end{proof}

%======================================================

It is proved in \cite[Theorem 11.2]{Di-Ti-2005} that for $f\in L_p,$ $0<p<q\le \infty$, and integer $r\ge 1$
the following Ulyanov-type inequality holds:
$$
   \omega^r_\varphi\left(f, t\right)_{\Ll{q}} \le C
      \left[
    \int_0^{t}\left(u^{-\sigma}\omega^{r}_\varphi(f,u)_{\Ll{p}} \right)^{q_1} \frac{du}{u}
    \right]^{1/q_1},
$$
where
 $q_1=\begin{cases}
        q, & q<\infty\\
        1, & q=\infty\end{cases},
  $ \  $\sigma=2\left(\frac1p-\frac1q \right).$
The next theorem refines this result.

\begin{theorem} \label{thU}
{\it
Let  $1\le p<q\le \infty,$ $a \ge b\ge 0,$  $r$  be a positive  integer$,$ and
 $$
\sigma =(2a+2)\left(\frac1p-\frac1q\right).
 $$
 Suppose that
  $f\in \LL{p}{a,b}$ and
$$    \int_0^1 \left(u^{-\sigma}\omega^{r+[\sigma]}_\varphi(f,u)_{\Lw{p}{a,b}} \right)^{q_1} \frac{du}{u}<\infty.
$$
 Then $f\in \LL{q}{a,b}$
and
  \begin{equation}\label{thU01}
   \omega^r_\varphi\left(f, t\right)_{\Lw{q}{a,b}} \le C
      \left[
    \int_0^{t}\left(u^{-\sigma}\omega^{r+[\sigma]}_\varphi(f,u)_{\Lw{p}{a,b}} \right)^{q_1} \frac{du}{u}
    \right]^{1/q_1} +
    Ct^r E_{r-1}(f)_{\Lw{p}{a,b}},
   \end{equation}
   where
   $$q_1=\begin{cases}
        q, & q<\infty,\\
        1, & q=\infty.
        \end{cases}
   $$

}  \end{theorem}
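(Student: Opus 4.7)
The strategy is to reduce Theorem~\ref{thU} to Theorem~\ref{thU1} for the main case $1<p<q<\infty$, then convert the fractional $K$-functional $K^{r+\sigma}$ appearing in the integrand back to the integer-order Ditzian--Totik modulus $\omega^{r+[\sigma]}_\varphi$. The boundary cases $p=1$ or $q=\infty$, where Theorem~\ref{thU1} does not apply, must be handled separately via direct polynomial approximation.

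For $1<p<q<\infty$, I would first use the equivalence $\omega^r_\varphi \asymp K^r_\varphi$ (valid for $a,b\ge 0$ by \cite[Theorem~6.1.1]{book}) together with Corollary~\ref{co1} to obtain $\omega^r_\varphi(f,t)_{\Lw{q}{a,b}} \le C K^r(f,\Opp^{(a,b)}_r,t)_{\Lw{q}{a,b}}$, and then apply Theorem~\ref{thU1} with $(\alpha,\beta)=(a,b)$ to get
\[
\omega^r_\varphi(f,t)_{\Lw{q}{a,b}} \le C\Bigl(\int_0^t\bigl(u^{-\sigma} K^{r+\sigma}(f,\Opp^{(a,b)}_{r+\sigma},u)_{\Lw{p}{a,b}}\bigr)^q\,\tfrac{du}{u}\Bigr)^{1/q}.
\]
The next step is to bound $K^{r+\sigma}(f,u)_{\Lw{p}{a,b}}$ in terms of $\omega^{r+[\sigma]}_\varphi(f,u)_{\Lw{p}{a,b}}$. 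To this end I would combine the realization \eqref{K-func2} for $K^{r+\sigma}$ with a Bernstein-type inequality $\|\Opp^{(a,b)}_{r+\sigma} P_n\|_{\Lw{p}{a,b}} \le Cn^{\sigma-[\sigma]}\|\Opp^{(a,b)}_{r+[\sigma]} P_n\|_{\Lw{p}{a,b}}$ (a consequence of Muckenhoupt-type multiplier estimates), and then pass to $\|\varphi^{r+[\sigma]}P_{n,f}^{(r+[\sigma])}\|$ by applying \eqref{th3_02} with order $r+[\sigma]$ to $P_{n,f}-S^{(a,b)}_{r+[\sigma]-1}P_{n,f}$, controlling the residual via \eqref{vvsspp}. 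Jackson's inequality and the realization \eqref{realization_classic} for $\omega^{r+[\sigma]}_\varphi$ complete this conversion.

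To turn the resulting $\|P_{n,f}\|_{\Lw{p}{a,b}}$ contribution into $E_{r-1}(f)_{\Lw{p}{a,b}}$, I would exploit the fact that both sides of \eqref{thU01} are invariant under subtracting any polynomial from $\Pi_{r-1}$: the moduli $\omega^r_\varphi$ and $\omega^{r+[\sigma]}_\varphi$ both annihilate $\Pi_{r-1}\subset \Pi_{r+[\sigma]-1}$, and $E_{r-1}(f)$ is unaffected. Replacing $f$ by $f-P^*$ for a near-best $\Lw{p}{a,b}$-approximant $P^*\in \Pi_{r-1}$ makes $\|P_{n,f}\|_{\Lw{p}{a,b}}$ comparable to $E_{r-1}(f)_{\Lw{p}{a,b}}$; integrating the resulting $u^{qr-1}$-type term then produces exactly $Ct^r E_{r-1}(f)_{\Lw{p}{a,b}}$.

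For the boundary cases $p=1$ or $q=\infty$, I would instead work directly from the realization $\omega^r_\varphi(f,1/n)_{\Lw{q}{a,b}} \le C(\|f-P_{n,f}\|_{\Lw{q}{a,b}} + n^{-r}\|\varphi^r P_{n,f}^{(r)}\|_{\Lw{q}{a,b}})$: bound the first term via the Ulyanov-type inequality for $E_n$ from \cite[Theorem~4.1]{Di-Ti-2005} combined with Jackson, and the second via Theorem~\ref{th3} together with boundary-case Hardy--Littlewood-type estimates (Corollary~\ref{r2} or Theorem~\ref{thHI}). The main obstacle throughout is the delicate bookkeeping in the Bernstein step: a naive estimate yields only the weaker error $Ct^{r+[\sigma]-\sigma}E_{r-1}(f)$, while the sharp $Ct^r E_{r-1}(f)$ required by \eqref{thU01} demands that the polynomial projections be organized so that the residual contribution arises at the precise order $u^{r+\sigma}$ (rather than $u^{r+[\sigma]}$) in the integrand, using the fact that only Fourier--Jacobi frequencies $\ge r+[\sigma]$ survive the subtraction of $S^{(a,b)}_{r+[\sigma]-1}P_{n,f}$.
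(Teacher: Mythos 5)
Your plan for the interior case $1<p<q<\infty$ (realization, the Ulyanov inequality for $\|f-P_{n,f}\|$ from \cite[Theorem 4.1]{Di-Ti-2005}, and a Bernstein/Landau step for the derivative term) has the right skeleton, but the detour through Theorem~\ref{thU1} and the fractional $K$-functional $K^{r+\sigma}$ is not what the paper does, and it creates exactly the difficulty you flag at the end: converting $K^{r+\sigma}$ back to $\omega^{r+[\sigma]}_\varphi$ via realization leaves a low-frequency residual of size $n^{-(r+[\sigma])}\|P_{n,f}\|_{\Lw{p}{a,b}}$, which integrates to $t^{r+[\sigma]-\sigma}E_{r-1}(f)$ rather than $t^{r}E_{r-1}(f)$; you state that the projections ``must be organized'' to fix this but do not actually carry out the fix, so this step remains a gap. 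The paper avoids the issue entirely by never passing through $\Opp^{(a,b)}_{r+\sigma}$: it estimates $n^{-r}\|\varphi^{r}P_{n,f}^{(r)}\|_{\Lw{q}{a,b}}$ directly with Lemma~\ref{le1} applied to $P_{n,f}-P_{r-1}$ (giving the two terms $\|P_{n,f}-P_{r-1}\|_{\Lw{p}{a,b}}\le 2E_{r-1}(f)_{\Lw{p}{a,b}}$ and $\|\varphi^{r+2[\sigma]-\sigma}P_{n,f}^{(r+[\sigma])}\|_{\Lw{p}{a,b}}$), and then uses the Daugavet--Rafal'son two-weight Markov inequality to trade the fractional weight gap $\sigma-[\sigma]$ for a factor $n^{\sigma-[\sigma]}$, after which \eqref{realization_classic} gives $n^{\sigma}\omega^{r+[\sigma]}_\varphi(f,1/n)_{\Lw{p}{a,b}}$ exactly.

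The more serious gap is in your treatment of the boundary cases $p=1$ and $q=\infty$, which are genuinely part of the statement. You propose to control $n^{-r}\|\varphi^{r}P_{n,f}^{(r)}\|_{\Lw{q}{a,b}}$ there using Theorem~\ref{th3} together with Corollary~\ref{r2} or Theorem~\ref{thHI}; but all of these rest on the Muckenhoupt transplantation theorem and require $1<p<q<\infty$ (Theorem~\ref{th3} requires $1<p<\infty$ and would have to be applied in the $q$-norm), so none of them is available when $p=1$ or $q=\infty$. This is precisely the reason the paper develops Lemma~\ref{le1}: it is an elementary Hardy/Landau-type inequality valid for all $1\le p\le q\le\infty$ with $(p,q)\neq(\infty,\infty)$, and it handles the derivative term uniformly in all cases, so no case split is needed. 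Without a substitute for the Muckenhoupt-based machinery at the endpoints, your argument does not cover the full range of the theorem.
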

 \begin{remark}
(A).\ \ \  In particular, (\ref{thU01}) implies
$$   \omega^r_\varphi\left(f, t\right)_{q} \le C
      \left[
    \int_0^{t}\left(u^{-1}\omega^{r+1}_\varphi(f,u)_{p} \right)^{q_1} \frac{du}{u}
    \right]^{1/q_1} +
    Ct^r E_{r-1}(f)_{p},
$$
when $\frac1p-\frac1q\ge \frac 12$, $1\le p<q\le \infty$, 
and
$$   \omega^r_\varphi\left(f, t\right)_{\infty} 
\le C
    \int_0^{t} u^{-2}\omega^{r+2}_\varphi(f,u)_{1}  \frac{du}{u}
     +
    Ct^r E_{r-1}(f)_{1}.
$$
(B).\ \ \ 
Corollary~\ref{co1} shows that for $1<p<q<\infty$ and
   positive integer $\sigma$
 Theorem~\ref{thU} follows from Theorem~\ref{thU1}.
\end{remark}

\begin{proof}
  The proof is similar to the proof of Theorem~\ref{thU1}.
  The only substantial  difference is that we use Lemma~\ref{le1}   instead of Theorem~D and Theorem~\ref{thHI}.

 Using monotonicity properties of the moduli of smoothness, it is enough  to verify inequality~\eqref{thU01} for
  $t=1/n,$ where $n$ is a positive integer.
  Let $P_{n,f}$ be defined by~\eqref{Pnf}.
   Taking into account that
   $\omega^r_\varphi(f,t)_{\Lw{{q}}{a,b}} \asymp K^r_{\varphi}(f,t)_{\Lw{{q}}{a,b}}$, %the realization result~\eqref{realization_classic},
   we obtain
    \begin{equation}\label{thU02}
  \omega^r_\varphi\left(f, t\right)_{\Lw{q}{a,b}}  \le C \left(\|f-P_{n,f}\|_{\Lw{q}{a,b}}+
  n^{-r}\|\varphi^r P_{n,f}^{(r)}\|_{\Lw{q}{a,b}}\right).
  \end{equation}

  To estimate the first term, we apply Theorem 4.1 from \cite{Di-Ti-2005}.
  Assumption (4.3) of this theorem is exactly the Nikol'skii inequality
   \begin{equation*}
   \|P_n\|_{\Lw{q}{a,b}}\le Cn^{(2a+2)\left(\frac1p-\frac1q\right)}\|P_n\|_{\Lw{p}{a,b}}, \quad P_n\in \Pi_n,
   \end{equation*}
   where   $C=C(p,q,a,b)$,  proved in~\cite[Theorem 4]{Da-Ra-1972} (see also~\cite[Ch.~6, Theorem~1.8.4, 1.8.5]{Mi-1994}).
   Therefore, we have
  \begin{equation*}
  \|f-P_{n,f}\|_{\Lw{q}{a,b}}\le C  \left(\sum_{k=n}^\infty k^{q_1\sigma-1}
 \|f-P_{k,f}\|_{\Lw{p}{a,b}}^{q_1} \right)^{1/q_1}.
 \end{equation*}
  Applying~\eqref{realization_classic} and replacing  the sum by the integral,  we get
  \begin{equation*}
  \begin{aligned}
  \|f-P_{n,f}\|_{\Lw{q}{a,b}}&\le C  \left(\sum_{k=n}^\infty k^{q_1\sigma-1}
 \|f-P_{k,f}\|_{\Lw{p}{a,b}}^{q_1} \right)^{1/q_1}
   \\
  &\le C\left(\sum_{k=n}^\infty k^{q_1\sigma-1}
 \omega^{r+[\sigma]}_\varphi(f, 1/k)_{\Lw{p}{a,b}}^{q_1} \right)^{1/q_1}
   \\
   &\le C
   \left( \int_0^t\left(u^{-\sigma}\omega^{r+[\sigma]}_\varphi(f,u)_{\Lw{p}{a,b}} \right)^{q_1} \frac{du}{u}\right)^{1/q_1}.
  \end{aligned}
 \end{equation*}

 To estimate the second term  in~\eqref{thU02},
 we use Lemma~\ref{le1}:
 \begin{align*}
  \left\|\varphi^rP_n^{(r)}\right\|_{\Lw{q}{a,b}}&=
   \left\|\varphi^r(P_n- P_{r-1})^{(r)}\right\|_{\Lw{q}{a,b}}\le \left\|P_n-P_{r-1}\right\|_{\Lw{p}{a,b}}+
  \left\|\varphi^{r+2[\sigma]-\sigma}P_n^{(r+[\sigma])}\right\|_{\Lw{p}{a,b}}.
 \end{align*}
Further we need
 the following two-weight inequality proved in \cite[Theorem 4]{Da-Ra-1972}:
\begin{align*}
 \left\|\varphi^{r+2[\sigma]-\sigma}P_n^{(r+[\sigma])}\right\|_{\Lw{p}{a,b}}
  \le   C \,n^{\sigma-[\sigma]} \left\|\varphi^{r+[\sigma]}P_n^{(r+[\sigma])}\right\|_{\Lw{p}{a,b}}.
  \end{align*}
  Therefore, using monotonicity properties of moduli of smoothness, we get
\begin{eqnarray*}
 n^{-r} \left\|\varphi^{r+2[\sigma]-\sigma}P_n^{(r+[\sigma])}\right\|_{\Lw{p}{a,b}}
  &\le& C \,n^\sigma  \omega^{r+[\sigma]}_\varphi(f, 1/n)_{\Lw{p}{a,b}}
  \\
  &\le& C
\left[ \int_{1/2n}^{1/n}\left(u^{-\sigma}\omega^{r+[\sigma]}_\varphi(f,u)_{\Lw{p}{a,b}} \right)^{q_1} \frac{du}{u}
   \right]^{1/q_1}.
  \end{eqnarray*}
 To complete the proof  we note that $\|P_n-P_{r-1}\|_{\Lw{p}{a,b}}\le 2E_{r-1}(f)_{\Lw{p}{a,b}}$.
 \end{proof}

{\bf Acknowledgement.}
  The authors would like to thank
   F. Dai, Z. Ditzian, and  G. Mastroianni for fruitful discussions and useful comments on the fractional $K$-functionals,
  and the  referee for reading the paper carefully and several  valuable   comments.

%============================
%============================

\medskip
\medskip

 \end{document}